\newcommand{\RR}{\mathbb{R}}
\title[Higher Order Convexity]{Inequalities and Higher Order Convexity}
\author{Zarathustra Brady}
\email{notzeb@caltech.edu}
\keywords{convexity, inequalities}
\subjclass[2000]{39B62}
\begin{document}

\begin{abstract}
We study the following problem: given $n$ real arguments $a_1, ..., a_n$ and $n$ real weights $w_1, ..., w_n$, under what conditions does the inequality
\[
w_1f(a_1) + w_2f(a_2) + \cdots + w_nf(a_n) \ge 0
\]
hold for all functions $f$ satisfying $f^{(k)}\ge0$ for some given integer $k$? Using simple combinatorial techniques, we can prove many generalizations of theorems ranging from the Fuchs inequality to the criterion for Schur convexity.
\end{abstract}

\maketitle

\section{Introduction}

The theory of majorization is remarkably rich and complete, culminating in Karamata's inequality \cite{karamata}, Muirhead's inequality \cite{muirhead}, and the theory of Schur-convex functions \cite{schur} (Karamata's inequality is also known as the Hardy-Littlewood-Polya inequality \cite{inequalities}). It therefore seems natural to try to generalize it to functions with a higher order of convexity. In particular, we study the following problem:

Given $n$ real arguments $a_1, ..., a_n$ and $n$ real weights $w_1, ..., w_n$, under what conditions does the inequality
\[
w_1f(a_1) + w_2f(a_2) + \cdots + w_nf(a_n) \ge 0
\]
hold for all functions $f$ satisfying $f^{(k)}\ge0$ for some given integer $k$?

\section{Basics of higher order convex functions}

Higher order convexity was introduced by Popoviciu, who defined it in terms of the divided differences of a function. Divided differences are defined inductively as follows:
\begin{align*}
[a_1; f] =& f(a_1)\\
[a_1, ..., a_{n+1}; f] =& \frac{[a_1, ..., a_n; f] - [a_2, ..., a_{n+1}; f]}{a_1 - a_{n+1}}.
\end{align*}
He then defined a $k-1$th order convex function to be one with all $k+1$ order divided differences positive. If the function has a $k$th derivative, then it is known that such functions are exactly those for which $f^{(k)} > 0$ \cite{convex}. For simplicity, we will only deal with functions with nonnegative $k$th derivative, although these results can be extended to all $k-1$th order convex functions.

Intuitively, the prototypical increasing function is a step function, while the prototypical convex function is the absolute value function. More generally, the prototypical functions with positive $k+1$th derivative are functions such as the following:

\begin{definition}
\[
(x)_+^k := \begin{cases}
x^k&\text{if } x>0,\\
0&\text{if } x\leq 0.\end{cases}
\]
\end{definition}

\begin{lemma}\label{classification}
For all $f:[a,b] \to \RR$ with $f^{(k)} \geq 0$, there is a nondecreasing function $\alpha:[a,b] \to \RR$ and a $k-1$th degree polynomial $P$ such that
\[
f(x) = P(x) + \int_{t=a}^b (x-t)_+^{k-1} d\alpha(t).
\]
\end{lemma}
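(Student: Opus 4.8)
The plan is to recognize this as a version of Taylor's theorem with an integral remainder, where the ``density'' of the remainder is supplied by the monotone function $f^{(k-1)}$ rather than by a pointwise $k$th derivative. The polynomial $P$ will be the degree $k-1$ Taylor polynomial of $f$ at the left endpoint,
\[
P(x) = \sum_{j=0}^{k-1} \frac{f^{(j)}(a)}{j!}(x-a)^j,
\]
and the nondecreasing function will be a rescaling of $f^{(k-1)}$. Since $f^{(k)} \ge 0$ the function $f^{(k-1)}$ is nondecreasing (and continuous, being differentiable), so I would set $\alpha(t) = \frac{1}{(k-1)!}\big(f^{(k-1)}(t) - f^{(k-1)}(a)\big)$, which is nondecreasing as required.

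With these choices the claim reduces to the Stieltjes-remainder identity
\[
f(x) = P(x) + \frac{1}{(k-1)!}\int_a^x (x-t)^{k-1}\, df^{(k-1)}(t),
\]
since $(x-t)_+^{k-1}$ vanishes for $t \ge x$, so the integral over $[a,b]$ against $d\alpha$ collapses to the integral over $[a,x]$ against $\tfrac{1}{(k-1)!}\,df^{(k-1)}$. I would prove this identity by induction on $k$. The base case $k=1$ is just the fundamental theorem of calculus in Stieltjes form, $\int_a^x df(t) = f(x) - f(a)$, giving $P(x)=f(a)$. For the inductive step, integrate the Stieltjes integral by parts: because $(x-t)^{k-1}$ is $C^1$ in $t$, the boundary term contributes $-\frac{(x-a)^{k-1}}{(k-1)!}f^{(k-1)}(a)$ (the $t=x$ endpoint dying for $k \ge 2$), and the remaining term becomes an ordinary Riemann integral $\frac{1}{(k-2)!}\int_a^x (x-t)^{k-2} f^{(k-1)}(t)\,dt$. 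Since $f^{(k-2)}$ is $C^1$ with derivative $f^{(k-1)}$, this equals $\frac{1}{(k-2)!}\int_a^x (x-t)^{k-2}\, df^{(k-2)}(t)$, to which the inductive hypothesis applies, and collecting terms rebuilds the degree $k-1$ Taylor polynomial.

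The step I expect to be the main obstacle is handling the regularity honestly. The hypothesis only asserts that $f^{(k)}$ exists and is nonnegative, not that it is continuous or even integrable, so I want to avoid writing the remainder as $\int (x-t)^{k-1} f^{(k)}(t)\,dt$ with $f^{(k)}$ as a density: a derivative need not be Riemann (or even Lebesgue) integrable, and the naive fundamental theorem of calculus can fail. Routing everything through the Riemann--Stieltjes integral against $f^{(k-1)}$ sidesteps this, since $f^{(k-1)}$ is continuous and of bounded variation (being monotone), so the relevant Stieltjes integrals exist for the continuous integrands $(x-t)^{k-1}$, integration by parts is valid, and the Stieltjes fundamental theorem $\int_a^x df^{(k-1)} = f^{(k-1)}(x)-f^{(k-1)}(a)$ holds without any absolute-continuity assumption. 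I would also note the cosmetic point that $\alpha$ is determined only up to an additive constant, which is harmless for the representation.
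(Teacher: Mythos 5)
Your proposal is correct and follows essentially the same route as the paper's own proof: both take $\alpha$ to be $f^{(k-1)}/(k-1)!$ (up to an additive constant), induct on $k$, and pass from $\int (x-t)_+^{k-1}\,df^{(k-1)}$ to the lower-order Stieltjes integral via integration by parts. Your version is somewhat more careful, explicitly identifying $P$ as the Taylor polynomial at $a$ and justifying why the Stieltjes formulation avoids any integrability issues with $f^{(k)}$, but the underlying argument is the same.
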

\begin{proof}
We prove by induction that we can in fact take $\alpha(t) = \frac{f^{(k-1)}}{(k-1)!}$. For the base case $k = 1$, we have
\[
\int_{t=a}^b (x-t)_+^0 \, df(t) = \int_{t=a}^x df(t) = f(x) - f(a).
\]
Now assume that it is proven for $k$, and we will prove it for $k+1$:
\begin{align*}
&\frac1{k!}\int_{t=a}^b (x-t)_+^k \, df^{(k)}(t)\\
= &\frac1{k!}\left[(x-t)_+^kf^{(k)}(t)\right]_{t=a}^b - \frac1{k!}\int_{t=a}^b f^{(k)}(t) \, d(x-t)_+^k\\
= &-\frac1{k!}(x-a)^kf^{(k)}(a) + \frac{k}{k!}\int_{t=a}^b (x-t)_+^{k-1} \, df^{(k-1)}(t),
\end{align*}
where the last integral is equal to $f(x)$ plus a polynomial of degree $k-1$ by the induction hypothesis.
\end{proof}

\begin{remark}
This lemma can be extended to arbitrary $k-1$th order convex functions if we interpret the integral as a Stieltjes integral, but the proof is more technical (see Bullen's paper, Corollary 8 \cite{criterion}).
\end{remark}

Our strategy from here on is to reduce proving an inequality on arbitrary $k-1$th order convex functions to proving it for the prototypical functions of the form $(x-t)_+^{k-1}$.

\begin{definition}
Given real arguments $a_1, a_2, ..., a_n$, real weights $w_1, w_2, ..., w_n$, and a positive integer $k$, let
\[
r_k(x) := \sum_{i=1}^n w_i (a_i - x)_+^{k-1}.
\]
\end{definition}

\begin{lemma}\label{auxiliaries}
Given real arguments $a_1, a_2, ..., a_n \in [a,b]$, real weights $w_1, w_2, ..., w_n$, and a positive integer $k$,
\[
\sum_{i=1}^n w_i f(a_i) \geq 0
\]
for all functions $f:[a,b] \to \RR$ with $f^{(k)} \geq 0$ iff
\[
\sum_{i=1}^n w_i a_i^j = 0 \text{ for integers } 0 \leq j < k \text{, and}
\]
\[
r_k(x) \geq 0 \text{ for all } x.
\]
\end{lemma}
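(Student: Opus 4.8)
The plan is to push both implications through the integral representation of Lemma~\ref{classification}, so that the single computational step is the identity obtained by writing an admissible $f$ as $f(x) = P(x) + \int_{t=a}^b (x-t)_+^{k-1}\,d\alpha(t)$ with $\deg P \le k-1$ and $\alpha$ nondecreasing, and then interchanging the finite sum with the integral:
\begin{align*}
\sum_{i=1}^n w_i f(a_i)
&= \sum_{i=1}^n w_i P(a_i) + \int_{t=a}^b \left(\sum_{i=1}^n w_i (a_i-t)_+^{k-1}\right) d\alpha(t)\\
&= \sum_{i=1}^n w_i P(a_i) + \int_{t=a}^b r_k(t)\,d\alpha(t).
\end{align*}
Both conditions then appear by reading off the two terms on the right.

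For sufficiency I would assume the two conditions and evaluate this identity. The moment conditions $\sum_i w_i a_i^j = 0$ for $0 \le j < k$ are exactly the statement that $\sum_i w_i Q(a_i) = 0$ for every polynomial $Q$ of degree less than $k$, since such a $Q$ is a linear combination of $1, x, \dots, x^{k-1}$; in particular the $P$-term vanishes. Because $\alpha$ is nondecreasing, $d\alpha$ is a nonnegative measure, and $r_k(t) \ge 0$ for all $t$, so the remaining integral is nonnegative. Since Lemma~\ref{classification} represents every admissible $f$ in this form, this yields $\sum_i w_i f(a_i) \ge 0$.

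For necessity I would assume $\sum_i w_i f(a_i) \ge 0$ for all admissible $f$ and recover each condition by a suitable choice of test function. The moment conditions come for free: $f(x) = x^j$ and $f(x) = -x^j$ with $0 \le j < k$ both have vanishing $k$th derivative, hence are admissible, and applying the hypothesis to each forces $\sum_i w_i a_i^j = 0$. To recover $r_k \ge 0$ I would like to test with the prototype $f = (\,\cdot\, - t)_+^{k-1}$, since the identity above then reads $\sum_i w_i f(a_i) = r_k(t)$.

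The hard part is that this prototype is only $C^{k-2}$ and has no classical $k$th derivative at $t$, so it is not literally admissible; this is the main obstacle. I would resolve it by smoothing: fix $t$ and choose smooth nondecreasing $\alpha_\epsilon$ that increase only on a small neighborhood of $t$ with total increase $1$, and set $f_\epsilon(x) = \int_{s=a}^b (x-s)_+^{k-1}\,d\alpha_\epsilon(s)$. These $f_\epsilon$ are genuinely smooth with $f_\epsilon^{(k)} = (k-1)!\,\alpha_\epsilon' \ge 0$, hence admissible, and the identity gives $\sum_i w_i f_\epsilon(a_i) = \int r_k\,d\alpha_\epsilon \to r_k(t)$ as $\epsilon \to 0$ wherever $r_k$ is continuous, which holds for $k \ge 2$; the case $k = 1$ is the same monotone approximation, reading off the step function $r_1$ at its jumps through one-sided limits. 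As each $\sum_i w_i f_\epsilon(a_i) \ge 0$, passing to the limit gives $r_k(t) \ge 0$ for $t \in [a,b]$. Finally, for $t \notin [a,b]$ the moment conditions already force $r_k(t) = 0$ by expanding $(a_i - t)^{k-1}$ and collecting powers of $t$, so $r_k \ge 0$ everywhere, completing the plan.
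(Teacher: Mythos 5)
Your proposal is correct and follows essentially the same route as the paper: sufficiency via the integral representation of Lemma \ref{classification} and the interchange of the finite sum with the integral against $d\alpha$, and necessity via the test functions $\pm x^j$ and the prototype $(x-t)_+^{k-1}$. The only difference is that you mollify the prototype before testing, which patches a detail the paper's one-line necessity argument glosses over (the prototype has no classical $k$th derivative at $t$), so your version is if anything slightly more careful.
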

\begin{proof}
Setting $f(x) = \pm x^j$ for $1 \leq j < k$ and $f(x) = (x-t)_+^{k-1}$, we see that both conditions are necessary. For the other direction, let $P$ and $\alpha:[a,b] \to \RR$ be as in Lemma \ref{classification}. Then we have
\begin{align*}
\sum_{i=1}^n w_i f(a_i) &= \sum_{i=1}^n w_i \left(P(a_i) + \int_{t=a_n}^{a_1} \! (a_i-t)_+^{k-1} d\alpha(t)\right)\\
&= \sum_{i=1}^n w_i P(a_i) + \int_{t=a_n}^{a_1} \left(\sum_{i=1}^n w_i (a_i-t)_+^{k-1}\right) d\alpha(t)\\
&= \int_{t=a_n}^{a_1} \! r_k(t) \, d\alpha(t) \geq 0. \qedhere
\end{align*}
\end{proof}

Using this lemma, it is \emph{in principle} possible to test the truth of an inequality by carefully analyzing $r_k(x)$ (using, say, the theory of Sturm chains). For instance, a complete classification of inequalities on functions with $f''' \ge 0$ is given by the following:

\begin{theorem}\label{useless}
Given real arguments $a_1 > a_2 > \cdots > a_n \in [a,b]$, and real weights $w_1, w_2, ..., w_n$ such that
\[
\sum_{i=1}^n w_i = \sum_{i=1}^n w_ia_i = \sum_{i=1}^n w_ia_i^2 = 0,
\]
a necessary and sufficient condition for the inequality
\[
\sum_{i=1}^n w_if(a_i) \ge 0
\]
to be true for every function $f:[a,b] \to \RR$ with $f''' \ge 0$ is that
\[
(\sum_{i=1}^j w_i)(\sum_{i=1}^j w_ia_i^2) \ge (\sum_{i=1}^j w_ia_i)^2
\]
for all $j$ such that $(\sum_{i=1}^j w_i)a_j \ge \sum_{i=1}^j w_ia_i \ge (\sum_{i=1}^j w_i)a_{j+1}$.
\end{theorem}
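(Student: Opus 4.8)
The plan is to invoke Lemma~\ref{auxiliaries}. Since the three moment conditions $\sum w_i = \sum w_i a_i = \sum w_i a_i^2 = 0$ are assumed, the inequality $\sum w_i f(a_i) \ge 0$ holds for every $f$ with $f''' \ge 0$ if and only if $r_3(x) = \sum_{i=1}^n w_i (a_i - x)_+^2 \ge 0$ for all real $x$. So the whole problem reduces to deciding the sign of the single function $r_3$, and the combinatorial condition in the statement should emerge as the precise test for $r_3 \ge 0$.

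First I would write $r_3$ as a piecewise quadratic. Setting $a_0 = +\infty$ and $a_{n+1} = -\infty$, on each interval $x \in (a_{j+1}, a_j)$ exactly the terms with $i \le j$ survive, so with the abbreviations $W_j = \sum_{i=1}^j w_i$, $S_j = \sum_{i=1}^j w_i a_i$, $Q_j = \sum_{i=1}^j w_i a_i^2$ we get $r_3(x) = W_j x^2 - 2 S_j x + Q_j$ there. Two features are immediate: $r_3$ is continuous, indeed $C^1$ with $r_3'(x) = -2\sum_i w_i (a_i - x)_+$; and the moment conditions force $r_3 \equiv 0$ both for $x \ge a_1$ and for $x \le a_n$ (in the latter range every weight is active, so $r_3(x) = W_n x^2 - 2 S_n x + Q_n = 0$). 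Thus $r_3$ vanishes at the two ends of $[a_n, a_1]$ and only needs to be controlled in between.

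Next I would locate the minima of $r_3$. Because $r_3$ is $C^1$, any interior minimum is a critical point, and the critical points are the zeros of the piecewise-linear function $-\tfrac12 r_3'(x) = \sum_i w_i (a_i - x)_+$, which on $(a_{j+1}, a_j)$ equals $S_j - W_j x$ and so vanishes at the vertex $x = S_j/W_j$. The range restriction $W_j a_j \ge S_j \ge W_j a_{j+1}$ in the hypothesis is exactly the assertion that this vertex lies in the closed interval $[a_{j+1}, a_j]$: for $W_j > 0$ it rearranges to $a_{j+1} \le S_j/W_j \le a_j$; for $W_j < 0$ it is impossible, since it would force $a_j \le a_{j+1}$; and for $W_j = 0$ it forces $S_j = 0$, making the tested inequality $W_j Q_j \ge S_j^2$ trivial. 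When $W_j > 0$ the parabola opens upward and the vertex is a genuine local minimum with value $Q_j - S_j^2/W_j$, which is nonnegative precisely when $W_j Q_j \ge S_j^2$.

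Finally I would assemble the equivalence, and this last reduction is where I expect the real work. The claim is that $r_3 \ge 0$ on all of $\RR$ if and only if $Q_j - S_j^2/W_j \ge 0$ at every upward vertex lying in its own interval, which is the stated condition. Necessity is clear, since each such vertex value is an actual value of $r_3$. For sufficiency I would argue by contradiction: if $r_3$ were negative somewhere, then, being continuous and vanishing at the endpoints $a_1$ and $a_n$, it would attain a negative minimum at some interior point of $[a_n, a_1]$; differentiability forces $r_3' = 0$ there, so the point is a vertex $S_j/W_j$ lying in its interval, and minimality forces the parabola to open upward ($W_j > 0$), whence $Q_j - S_j^2/W_j < 0$ contradicts the hypothesis. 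Pieces with $W_j \le 0$ contribute no new conditions, since a concave or linear arc takes its minimum at an endpoint, i.e.\ at a knot already governed by the neighbouring vertex tests. The subtle point, and the reason the hypothesis uses the closed inequalities $\ge$ rather than strict ones, is the degenerate case where the minimizing vertex sits exactly on a knot: there $S_j = W_j a_j$, the range restriction holds with equality, and the corresponding instance of $W_j Q_j \ge S_j^2$ is exactly what rules out a negative knot value.
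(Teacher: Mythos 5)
Your proposal is correct and follows the same route as the paper: reduce via Lemma~\ref{auxiliaries} to the nonnegativity of the piecewise-quadratic $r_3$, observe that it vanishes outside $[a_n,a_1]$, and identify the range restriction as ``the vertex $S_j/W_j$ lies in its own interval'' and the tested inequality as ``the vertex value $Q_j - S_j^2/W_j$ is nonnegative.'' The paper leaves the endgame to the reader with exactly this outline (a negative value forces an interior local minimum of some quadratic piece, i.e.\ $r_3'(a_{j+1}) \le 0 \le r_3'(a_j)$), and you supply the omitted details, including the degenerate cases $W_j \le 0$ and the vertex-at-a-knot situation.
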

\begin{proof}
By Lemma \ref{auxiliaries}, we just need to find the condition for $r_3$ to be nonnegative on $[a,b]$. The conditions imply that $r_3$ is $0$ outside of the interval $[a,b]$, so if it is ever negative then it must have a local minimum in some interval $[a_{j+1},a_j]$. $r_3$ is quadratic on each interval, so we must then have $r_3'(a_{j+1}) \le 0 \le r_3'(a_j)$, and the minimum of $r_3$ on this interval can be easily computed (the details are left to the reader).
\end{proof}

\section{A simple trick}

A little experimentation with small cases indicates that, generally, $r_k(x)$ is always positive or always negative when $n$ is small - there just aren't enough variables for it to change sign. Formally,

\begin{definition}
The number of \emph{sign changes} of a function $f:[a,b] \to \RR$, is the maximum number $n$ such that there exist real numbers $a_1 \geq a_2 \geq \cdots \geq a_{n+1} \in [a,b]$ with $f(a_i)f(a_{i+1}) < 0$ for $1 \leq i \leq n$.
\end{definition}

\begin{lemma}\label{wiggles}
If a differentiable nonconstant function $f:[a,b] \to \RR$ is such that $f(a) = f(b) = 0$, then either $f$ has strictly fewer sign changes than $f'$, or both $f$ and $f'$ have an infinite number of sign changes.
\end{lemma}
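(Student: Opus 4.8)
The plan is to prove a single uniform claim that covers both alternatives at once: \emph{any} finite chain of points witnessing $m$ sign changes of $f$ can be upgraded to a chain witnessing $m+1$ sign changes of $f'$. If $f$ has finitely many sign changes, applying this to a maximal chain shows $f'$ has strictly more; and if $f$ has infinitely many, then for every $m$ we manufacture $m+1$ sign changes of $f'$, so $f'$ has infinitely many as well. Thus it suffices to establish this one upgrading claim.

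First I would fix increasing witnesses $s_1 < s_2 < \cdots < s_{m+1}$ in $[a,b]$ with $f(s_i)f(s_{i+1}) < 0$, and after possibly replacing $f$ by $-f$ (which changes neither sign-change count) assume $f(s_1) > 0$, so that $f(s_i)$ has sign $(-1)^{i-1}$. Since each $f(s_i) \ne 0$ while $f(a) = f(b) = 0$, no $s_i$ can coincide with an endpoint, and therefore $a < s_1$ and $s_{m+1} < b$. I then append the endpoints, setting $s_0 := a$ and $s_{m+2} := b$, obtaining $m+3$ ordered points on which $f$ realizes the sign pattern $0, +, -, +, \ldots, \pm, 0$.

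Next, on each of the $m+2$ consecutive subintervals $(s_i, s_{i+1})$ I would apply the mean value theorem to produce a point $\xi_i$ with $f'(\xi_i) = (f(s_{i+1}) - f(s_i))/(s_{i+1} - s_i)$. A short sign computation shows $f'(\xi_i)$ has sign $(-1)^i$: on the two end intervals because $f$ moves between $0$ and a nonzero value, which fixes the slope's sign, and on the interior intervals because $f$ jumps between consecutive opposite signs. Since $\xi_0 < \xi_1 < \cdots < \xi_{m+1}$ lie in disjoint ordered intervals and their $f'$-values strictly alternate in sign, they witness $m+1$ sign changes of $f'$, proving the claim (reversing the order if one wishes to match the decreasing convention of the definition).

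The only genuinely separate point is the base case $m = 0$, where the sign-change definition supplies no witness at all: here I instead invoke that $f$ is nonconstant with $f(a) = f(b) = 0$ to select an interior $s_1$ with $f(s_1) \ne 0$, and the same appended-endpoint, two-interval argument yields a sign change of $f'$. The part to get right is exactly this sign bookkeeping together with the observation that the witnesses lie strictly inside $(a,b)$: it is precisely the vanishing of $f$ at \emph{both} endpoints that produces the two extra sign changes of $f'$ beyond those forced by the interior alternation.
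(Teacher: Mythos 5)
Your proof is correct and is exactly the mean value theorem argument the paper has in mind (the paper's own proof is just the one-line assertion that the lemma is ``an easy consequence of the mean value theorem''). You have filled in the details soundly: the uniform upgrading claim handles the finite and infinite cases together, the sign bookkeeping on the $m+2$ subintervals is right, and you correctly isolate the two points that make the argument work, namely that the witnesses lie strictly inside $(a,b)$ and that nonconstancy is needed for the degenerate case $m=0$.
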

\begin{proof}
This is an easy consequence of the mean value theorem.
\end{proof}

Now for the main result:

\begin{theorem}\label{counting}
Given real arguments $a_1 > a_2 \geq a_3 \geq \cdots \geq a_n \in [a,b]$, real weights $w_1, w_2, ..., w_n$, and a positive integer $k$ such that $w_1 > 0$,
\[
\sum_{i=1}^n w_i a_i^j = 0 \text{ for all integers } 0 \leq j < k,
\]
and such that one of the following three conditions is satisfied:
\begin{itemize}
\item There are at most $k$ sign changes in the sequence $w_1, w_2, ..., w_n$.
\item There are at most $k-1$ sign changes in the sequence $w_1, w_1+w_2, ..., w_1+\cdots+w_n$.
\item There are at most $k-2$ sign changes in the sequence $w_1a_1 - w_1a_1, (w_1a_1 + w_2a_2) - (w_1 + w_2)a_2, ..., (w_1a_1 + \cdots + w_na_n) - (w_1 + \cdots + w_n)a_n$.
\end{itemize}
Then we have
\[
\sum_{i=1}^n w_i f(a_i) \geq 0
\]
for all functions $f:[a,b] \to \RR$ with $f^{(k)} \geq 0$.
\end{theorem}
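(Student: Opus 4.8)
The plan is to invoke Lemma~\ref{auxiliaries}, which (given the moment conditions, exactly our hypotheses) reduces the claim to showing $r_k(x)\ge 0$ for all $x$. I would first record two structural facts about $r_m(x):=\sum_{i=1}^n w_i(a_i-x)_+^{m-1}$ for $1\le m\le k$. Differentiating termwise gives $r_m' = -(m-1)r_{m-1}$, so $r_m'$ and $r_{m-1}$ have the same number of sign changes (a nonzero constant factor flips signs globally but preserves the count). Moreover $r_m$ vanishes identically outside $[a_n,a_1]$: for $x\ge a_1$ every summand is zero, while for $x\le a_n$ expanding $(a_i-x)^{m-1}$ and using $\sum_i w_i a_i^j=0$ for $0\le j\le m-1\le k-1$ kills $r_m$. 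In particular $r_m(a)=r_m(b)=0$. Finally, for $x$ just below $a_1$ only the first term is active (here I use $a_1>a_2$), so $r_k(x)=w_1(a_1-x)^{k-1}>0$ since $w_1>0$.

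The engine is to apply Lemma~\ref{wiggles} to each $r_m$ in turn. If some $r_m$ is constant then, being zero at the endpoints, it is identically zero, which forces $r_{m'}\equiv 0$ for all $m'\ge m$ and in particular $r_k\equiv 0\ge 0$; so I may assume each $r_m$ is nonconstant. Since each $r_m$ is a piecewise polynomial it has finitely many sign changes, so Lemma~\ref{wiggles} yields the strict inequality $\mathrm{sc}(r_m)<\mathrm{sc}(r_m')=\mathrm{sc}(r_{m-1})$, where $\mathrm{sc}$ denotes the number of sign changes. Chaining these from $m=k$ downward gives $\mathrm{sc}(r_k)\le \mathrm{sc}(r_1)-(k-1)$, and stopping one level higher, $\mathrm{sc}(r_k)\le \mathrm{sc}(r_2)-(k-2)$.

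Next I would match the three sequences to $\mathrm{sc}(r_1)$, $\mathrm{sc}(r_2)$, and the jumps of $r_1$. Since $r_1$ is piecewise constant with value $w_1+\cdots+w_j$ on $(a_{j+1},a_j)$, its sign changes equal those of the partial-sum sequence of condition two. Since $r_2$ is piecewise linear with node values $r_2(a_j)=\sum_{i\le j}w_ia_i-(\sum_{i\le j}w_i)a_j$, and a piecewise-linear function has no interior sign change between consecutive nodes, its sign changes equal those of the sequence of condition three. For condition one I would apply Lemma~\ref{wiggles} to the piecewise-linear interpolant $\tilde S$ of the partial sums $S_0=0,S_1,\ldots,S_n$ (with $S_n=0$ from the $j=0$ moment) on $[0,n]$: its derivative is piecewise constant with values $w_i$, so $\mathrm{sc}(r_1)=\mathrm{sc}(\tilde S)<\mathrm{sc}(\tilde S')=\mathrm{sc}(w_1,\ldots,w_n)$, giving $\mathrm{sc}(r_1)\le \mathrm{sc}(w)-1$. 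In each of the three cases the hypothesis then forces $\mathrm{sc}(r_k)\le 0$, and since $r_k$ is positive just below $a_1$ and never changes sign, $r_k\ge 0$ everywhere, which is exactly what Lemma~\ref{auxiliaries} requires.

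The main obstacle I foresee is regularity in the repeated use of Lemma~\ref{wiggles}: the $r_m$ have corners at the nodes $a_i$ (and $r_2'=-r_1$ is only piecewise continuous), so they are not differentiable everywhere as the lemma literally demands. I expect this to be harmless, since the mean value theorem argument behind Lemma~\ref{wiggles} needs only continuity together with differentiability off a finite set; but making this precise, and checking that ties among the $a_i$ and the interpolation bookkeeping neither create nor destroy sign changes, is the step that needs genuine care.
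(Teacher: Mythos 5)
Your proposal is correct and follows essentially the same route as the paper's proof: reduce to $r_k\ge 0$ via Lemma~\ref{auxiliaries}, use $r_m'\propto r_{m-1}$ together with Lemma~\ref{wiggles} to drive the sign-change count of $r_k$ down to zero, identify the second and third hypothesis sequences with the sign changes of $r_1$ and $r_2$, and conclude from $r_k>0$ just below $a_1$. Your treatment is in fact somewhat more careful than the paper's (you handle the degenerate case $r_m\equiv 0$, get the correct derivative constant where the paper writes $r_j'=jr_{j-1}$, and flag the piecewise-differentiability issue that the paper glosses over), but these are refinements of the same argument rather than a different one.
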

\begin{proof}
From the assumption $\sum_{i=1}^n w_i a_i^j = 0$, we see that $r_j(a) = r_j(b) = 0$ for all $1 \leq j \leq k$.

It's easy to see that the first condition implies the second condition, by a discrete analog of Lemma \ref{wiggles}.

To understand the second condition, note that $r_1$ is a step function taking the values $w_1, w_1+w_2, ..., w_1+\cdots+w_n$, so the number of sign changes in this sequence is the same as the number of sign changes of $r_1$.

To understand the third condition, note that $r_2$ is a piecewise linear function taking the value $0$ outside the interval $[a,b]$ and that the values of $r_2$ at the points where its slope changes are given by the sequence $w_1a_1 - w_1a_1, (w_1a_1 + w_2a_2) - (w_1 + w_2)a_2, ..., (w_1a_1 + \cdots + w_na_n) - (w_1 + \cdots + w_n)a_n$, so the number of sign changes of this sequence is the same as the number of sign changes of $r_2$.

Now, since $r_j'(x) = jr_{j-1}(x)$, we get from repeated application of Lemma \ref{wiggles} that for each $j$, the number of sign changes of $r_j(x)$ is at most $k-j$, so $r_k$ has $0$ sign changes. Thus, since $r_k(a_2) = w_1(a_1-a_2)^{k-1} > 0$, $r_k(x)$ must be nonnegative for all $x$. At this point, we simply apply Lemma \ref{auxiliaries} to finish the proof.
\end{proof}

\begin{remark}
Based on the proof, we can also see that if
\[
\sum_{i=1}^n w_i a_i^j = 0 \text{ for all integers } 0 \leq j < k,
\]
then there can't be fewer than $k-1$ sign changes in the sequence $w_1, w_1+w_2, ..., w_1+\cdots+w_n$.
\end{remark}

\section{Applications}

\begin{corollary}\label{smooth1}
Given real numbers $a_1, a_2, ..., a_k \in [a,b]$ and $b_1, b_2, ..., b_k \in [a,b]$ such that
\[
\sum_{i=1}^k a_i^j = \sum_{i=1}^k b_i^j \text{ for all integers } 1 \leq j < k,
\]
the following are equivalent:
\begin{enumerate}
\item $\sum_{i=1}^k a_i^k \geq \sum_{i=1}^k b_i^k\\$
\item $\max \{a_i\}_{i=1}^k \geq \max \{b_i\}_{i=1}^k\\$
\item $\sum_{i=1}^k f(a_i) \geq \sum_{i=1}^k f(b_i) \text{ for all functions } f:[a,b] \to \RR$ \text{ with } $f^{(k)} \geq 0.$
\end{enumerate}
\end{corollary}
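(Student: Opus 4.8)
My plan is to establish the cycle of implications $(2)\Rightarrow(3)\Rightarrow(1)\Rightarrow(2)$. The step $(3)\Rightarrow(1)$ is free, since $f(x)=x^k$ has $f^{(k)}=k!\geq 0$ and substituting it into $(3)$ gives $(1)$. For the other two steps I would merge the two tuples into a single weighted family by attaching weight $+1$ to each $a_i$ and weight $-1$ to each $b_i$. With $c_1,\dots,c_{2k}$ denoting the combined arguments and $w_1,\dots,w_{2k}$ the corresponding signs, the inequality in $(3)$ is exactly $\sum_i w_i f(c_i)\geq 0$, and the hypothesis $\sum_i a_i^j=\sum_i b_i^j$ for $1\leq j<k$, together with the trivial $j=0$ identity coming from both tuples having length $k$, is precisely the moment condition $\sum_i w_i c_i^{\,j}=0$ for $0\leq j<k$ demanded by Lemma \ref{auxiliaries} and Theorem \ref{counting}.

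To prove $(1)\Leftrightarrow(2)$ I would translate everything into symmetric functions. Equality of the power sums $p_1,\dots,p_{k-1}$ of the two tuples forces equality of the elementary symmetric functions $e_1,\dots,e_{k-1}$ by Newton's identities, so the monic polynomials $\prod_i(x-a_i)$ and $\prod_i(x-b_i)$ agree in every coefficient except possibly the constant term; their difference is therefore a constant $c=(-1)^k(e_k(a)-e_k(b))$. Evaluating this difference at $x=\max_i a_i$ (where the first product vanishes) gives $c=-\prod_i(\max_i a_i-b_i)$, and evaluating at $x=\max_i b_i$ gives $c=\prod_i(\max_i b_i-a_i)$; comparing the signs of these two products yields $\max_i a_i>\max_i b_i\Leftrightarrow c<0$ and $\max_i a_i=\max_i b_i\Leftrightarrow c=0$ (in the latter case the two tuples in fact coincide). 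Finally, the Newton identity for $p_k$ isolates the dependence on $e_k$ and gives $p_k(a)-p_k(b)=-kc$, so $(1)$ says $c\leq 0$, which is exactly $(2)$.

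For $(2)\Rightarrow(3)$ I would apply Theorem \ref{counting} to the merged family. If the tuples coincide then $(3)$ is trivial, so assume they differ; then $c\neq 0$ and by the previous paragraph $(2)$ sharpens to $\max_i a_i>\max_i b_i$, so the largest argument is one of the $a_i$. The moment conditions are already in hand, so it remains to check a sign-change hypothesis, for which I would use the second one, concerning the partial sums of the weights, i.e.\ the values of the step function $r_1$. The Remark following Theorem \ref{counting} guarantees that these partial sums undergo \emph{at least} $k-1$ sign changes. For the matching upper bound I would observe that, after splitting any repeated argument into unit steps, the partial sums trace a $\pm 1$ walk that begins and ends at $0$ and uses at most $2k$ steps; decomposing such a walk into excursions between consecutive returns to $0$, each of length at least two, shows there are at most $k$ excursions and hence at most $k-1$ sign changes. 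Thus $r_1$ has exactly $k-1$ sign changes, the walk is forced to be the minimal alternating one that returns to $0$ after every second step, and in particular the largest argument $\max_i a_i$ occurs with multiplicity one and carries weight $+1$. Consequently the hypotheses of Theorem \ref{counting} ($a_1>a_2$, $w_1>0$, the vanishing moments, and at most $k-1$ sign changes) all hold, and the theorem yields $\sum_i w_i f(c_i)\geq 0$, which is $(3)$.

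The main obstacle is the verification of the sign-change hypothesis in the last step: the upper bound requires the excursion analysis of the $\pm1$ walk, and it must be combined with the lower bound from the Remark to force the walk into its extremal shape. This extremal shape is exactly what rules out a repeated largest argument and guarantees the strict inequality $a_1>a_2$ and the positive leading weight that Theorem \ref{counting} requires; the strict form of $(2)$ is what tells us this leading weight belongs to an $a_i$ rather than a $b_i$. The symmetric-function bookkeeping in $(1)\Leftrightarrow(2)$, while routine, also needs care with the signs in Newton's identities.
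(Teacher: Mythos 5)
Your proof is correct, and its engine is the same as the paper's: merge the two tuples into one family with weights $\pm 1$ and feed it to Theorem \ref{counting} via the second sign-change condition, the bound of $k-1$ sign changes coming from the fact that the partial sums form a $\pm 1$ walk of length $2k$ returning to $0$. Your excursion decomposition is essentially the counting argument in the paper's proof of Corollary \ref{smooth2}, which the paper invokes here by setting $a_{k+1}=b_{k+1}$. Where you genuinely diverge is in the logic surrounding $(1)$ and $(2)$: the paper gets the reverse implications from the dichotomy that $r_k$, having no sign changes, yields one of the two opposite inequalities in $(3)$, combined with the observation (via the fundamental theorem of algebra) that neither $(1)$ nor $(2)$ can be an equality when the multisets differ; you instead prove $(1)\Leftrightarrow(2)$ outright with Newton's identities and the constant-term computation $p_k(a)-p_k(b)=-kc$, then run the cycle $(2)\Rightarrow(3)\Rightarrow(1)\Rightarrow(2)$. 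That buys you an explicit identification of which tuple dominates, at the cost of some symmetric-function bookkeeping the paper avoids. One simplification worth noting: your appeal to the Remark after Theorem \ref{counting} to force the walk into its extremal alternating shape (and hence to show $\max_i a_i$ has multiplicity one) is more machinery than you need. In the nontrivial case you have already shown $\max_i a_i>\max_i b_i$, so every copy of the largest combined argument carries weight $+1$; merging equal arguments then gives $w_1>0$ and $a_1>a_2$ immediately, while the sign-change count can only decrease under merging.
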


The $k = 3$ case of Corollary \ref{smooth1} was originally proved by Vasile C{\^{\i}}rtoaje, using the identity
\[
f(x) + f(y) + f(z) - f(a) - f(b) - f(c)=\frac 1{2}(xyz-abc)f'''(w)
\]
for some $w$ in the smallest interval containing all of $a, b, c, x, y, z$ \cite{vasc}.

In general, it's easy to see that the third bullet point of Corollary \ref{smooth1} implies the other two bullet points by plugging in $f(x) = x^k$ or $f(x) = (x-\max\{a_i\}_{i=1}^k)_+^{k-1}$, and that since we can't have equality in either of the first two bullet points when the sets $\{a_i\}_{i=1}^k, \{b_i\}_{i=1}^k$ are different (by the fundamental theorem of algebra applied to the polynomials $\prod_{i=1}^k(x-a_i)$ and $\prod_{i=1}^k(x-b_i)$) the reverse implications will hold if we can prove that $r_k$ has no sign changes. Thus, Corollary \ref{smooth1} follows from the argument of the next Corollary upon setting $a_{k+1} = b_{k+1}$.

\begin{corollary}\label{smooth2}
Given real numbers $a_1 \geq a_2 \geq \cdots \geq a_{k+1} \in [a,b]$ and $b_1 \geq b_2 \geq \cdots \geq b_{k+1} \in [a,b]$ with $a_1 \ge b_1$ and $(-1)^k a_{k+1} \le (-1)^k b_{k+1}$ such that
\[
\sum_{i=1}^{k+1} a_i^j = \sum_{i=1}^{k+1} b_i^j \text{ for all integers } 1 \leq j < k,
\]
we have
\[
\sum_{i=1}^{k+1} f(a_i) \geq \sum_{i=1}^{k+1} f(b_i)
\]
for all functions $f:[a,b] \to \RR$ with $f^{(k)} \geq 0$.
\end{corollary}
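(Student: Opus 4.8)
The plan is to deduce this from Theorem~\ref{counting} applied to the combined system. I form the list of $2(k+1)$ arguments consisting of all the $a_i$ together with all the $b_i$, sorted in decreasing order, and I attach the weight $+1$ to each $a_i$ and $-1$ to each $b_i$; after merging any coincident arguments by adding their weights (a limiting argument disposes of the degenerate case $a_1=b_1$), the desired inequality $\sum f(a_i)\ge\sum f(b_i)$ becomes $\sum_c w_c f(c)\ge 0$. The largest argument is $a_1$, because $a_1\ge b_1$, so its weight is $+1>0$, which supplies the hypothesis $w_1>0$. The moment hypotheses $\sum_c w_c c^{\,j}=0$ for $0\le j<k$ also hold: for $j=0$ this is the equality of cardinalities $(k+1)-(k+1)=0$, and for $1\le j<k$ it is exactly the assumption $\sum_i a_i^j=\sum_i b_i^j$. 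Everything then reduces to verifying one of the three sign-change conditions, and I will aim at the second, namely that $r_1$ has at most $k-1$ sign changes.

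The engine of the argument is a structural fact. First observe that $r_1(x)=T(x):=\#\{i:a_i>x\}-\#\{i:b_i>x\}$, so the sign changes of $r_1$ are precisely those of $T$. Now, since the power sums $p_1,\dots,p_{k-1}$ of $\{a_i\}$ and $\{b_i\}$ agree, Newton's identities force the elementary symmetric functions $e_1,\dots,e_{k-1}$ to agree as well; hence in the monic degree-$(k+1)$ polynomials $A(x):=\prod_i(x-a_i)$ and $B(x):=\prod_i(x-b_i)$ the coefficients of $x^{k+1},x^k,\dots,x^2$ all cancel, so $A(x)-B(x)$ is a polynomial of degree at most $1$.

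The boundary conditions pin down the sign of $T$ at the two ends of its support: just below $a_1$ only the top term is active, so $T=+1$, while just above the smallest argument one computes $T=(-1)^{k-1}$, the hypothesis $(-1)^k a_{k+1}\le(-1)^k b_{k+1}$ being exactly what places an $a$ or a $b$ at the bottom so as to produce this sign. Thus the parity of the number of sign changes of $T$ is forced to match that of $k-1$. On the other hand, the vanishing moments give
\[
\int T(x)\,x^j\,dx=\tfrac{1}{j+1}\bigl(p_{j+1}^a-p_{j+1}^b\bigr)=0\qquad(0\le j\le k-2),
\]
so $T$ is orthogonal to every polynomial of degree $\le k-2$; the usual argument (a polynomial of degree $\le k-2$ vanishing at the sign changes of $T$ would make $\int T\cdot(\text{poly})$ strictly of one sign) then shows $T$ has at least $k-1$ sign changes, recovering the lower bound of the Remark. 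The main obstacle is the matching upper bound: I must rule out $k+1$ or more sign changes, and this is precisely where the degree bound on $A-B$ enters. Equal low-order power sums obstruct excessive interlacing of the two decreasing sequences; for $k=2$, a fully alternating pattern $a_1>b_1>a_2>b_2$ would force $a_1+a_2>b_1+b_2$, contradicting equal sums, and in general the linearity of $A-B$ limits how often the merged list can alternate in type. I expect that turning this into a clean bound — for instance by a Rolle- or Wronskian-type argument tying the type-alternations of the merged list to the (at most one) real zero of $A-B$ — will be the technical heart of the proof.

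Granting that $r_1$ has at most $k-1$ sign changes, repeated application of Lemma~\ref{wiggles} to $r_1,r_2,\dots,r_k$ (each $r_j$ vanishes at $a$ and $b$ by the moment conditions, and $r_j'\propto r_{j-1}$) shows that $r_k$ has $0$ sign changes. Since $r_k(x)=(a_1-x)^{k-1}>0$ for $x$ just below $a_1$, it follows that $r_k\ge 0$ everywhere, the boundary computation above independently confirming $r_k\ge 0$ near the bottom of its support. Lemma~\ref{auxiliaries} then yields $\sum_c w_c f(c)\ge 0$, which is the claimed inequality.
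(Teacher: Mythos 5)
Your reduction is the same as the paper's: merge the $a_i$ and $b_i$ into one decreasing list of $2k+2$ arguments with weights $\pm 1$, note that $a_1\ge b_1$ gives $w_1=+1$, check the moment conditions, and aim at the second condition of Theorem~\ref{counting}, i.e.\ that the partial sums $w_1,\,w_1+w_2,\,\dots$ have at most $k-1$ sign changes. But that bound is the entire content of the proof, and you do not establish it: you prove the parity constraint and the \emph{lower} bound of $k-1$ sign changes (which is the Remark after Theorem~\ref{counting}, not what is needed), and then explicitly defer the upper bound to a hoped-for ``Rolle- or Wronskian-type argument'' based on $A(x)-B(x)$ having degree at most $1$. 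That is a genuine gap, not a routine verification: it is not at all clear how the degree bound on $A-B$ controls the interlacing pattern of the two sequences, and you give no mechanism for converting one into the other.

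The paper closes exactly this gap with a short combinatorial argument that, notably, uses \emph{none} of the higher moment conditions -- only integrality. The partial sums are integers changing by $\pm 1$ at each step, so two consecutive partial sums of opposite sign must be at least two indices apart (the sum must pass through $0$ in between). If there were $k$ sign changes, witnessed by indices $i_1<\dots<i_{k+1}$, then $i_{j+1}\ge i_j+2$ and $i_{k+1}\le 2k+1$ (the full sum is $0$), forcing $i_1=1$, $i_{k+1}=2k+1$, and hence $\operatorname{sign}(w_1+\dots+w_{2k+1})=(-1)^k\operatorname{sign}(w_1)$. But $w_1=+1$ (from $a_1\ge b_1$) and $w_1+\dots+w_{2k+1}=-w_{2k+2}=(-1)^{k+1}$ (from $(-1)^k a_{k+1}\le(-1)^k b_{k+1}$, which dictates whether the smallest argument is an $a$ or a $b$), a contradiction. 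You should replace your $A-B$ speculation with this counting argument; as written, your proof does not go through.
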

\begin{proof}
Let $c_1 \geq c_2 \geq \cdots \geq c_{2k+2}$ be the union of the $a_i$s and the $b_i$s, and let $w_j$ be either $1$ or $-1$, depending on whether $c_j$ was originally an $a_i$ or a $b_i$. By Theorem \ref{counting}, it's enough to show that the sequence $w_1, w_1+w_2, ..., w_1+\cdots+w_{2k+2}$ has at most $k-1$ sign changes.

Assume, for the sake of contradiction, that $w_1, w_1+w_2, ..., w_1+\cdots+w_{2k+2}$ has at least $k$ sign changes. Then we must have a sequence $i_1 \leq i_2 \leq \cdots \leq i_{k+1}$ such that $(w_1+\cdots+w_{i_j})(w_1+\cdots+w_{i_{j+1}}) < 0$ for all $j$. Then $i_{j+1} \geq i_j+2$, so $2k+2 > i_{k+1} \geq i_k+2 \geq \cdots \geq i_1+2k \geq 2k+1$ ($i_{k+1} \neq 2k+2$, because $w_1+\cdots+w_{2k+2} = 0$). Thus, we must have $i_1 = 1$, $i_{k+1} = 2k+1$, and $(-1)^kw_1(w_1+\cdots+w_{2k+1}) > 0$. On the other hand, since $c_1 = a_1$, we have $w_1 = 1 > 0$, and similarly $w_1+\cdots+w_{2k+1} = -w_{2k+2} = (-1)^{k+1}$, a contradiction.
\end{proof}

A similar argument gives us the following Corollary, but instead we will directly derive it from Corollary \ref{smooth2}:

\begin{corollary}\label{gensen2}
Given real arguments $a_1 > a_2 > \cdots > a_{k+2} \in [a,b]$ and weights $w_1, w_2, ..., w_{k+2}$  with $w_1 \geq 0$ and $(-1)^kw_{k+2} \geq 0$ such that
\[
\sum_{i=1}^{k+2} w_i a_i^j = 0 \text{ for all } 0 \leq j < k,
\]
we have
\[
\sum_{i=1}^{k+2} w_i f(a_i) \geq 0
\]
for all functions $f:[a,b] \to \RR$ with $f^{(k)} \geq 0$.
\end{corollary}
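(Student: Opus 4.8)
The plan is to exploit the fact that, since the $k+2$ arguments $a_1>\cdots>a_{k+2}$ are distinct, the linear conditions $\sum_{i=1}^{k+2}w_ia_i^j=0$ for $0\le j<k$ cut the space of admissible weight vectors down to dimension exactly $2$ (the relevant $k\times(k+2)$ coefficient matrix is a Vandermonde matrix of full rank $k$). Writing $W$ for this $2$-dimensional space, I would first check that the two coordinate functionals $w\mapsto w_1$ and $w\mapsto w_{k+2}$ furnish an isomorphism $W\xrightarrow{\sim}\RR^2$: a weight vector in $W$ with $w_1=w_{k+2}=0$ is supported on the $k$ points $a_2,\dots,a_{k+1}$ and still satisfies the $k$ moment conditions $0\le j<k$, so by invertibility of the resulting square Vandermonde matrix it must vanish. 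Consequently the cone $C=\{w\in W: w_1\ge 0,\ (-1)^kw_{k+2}\ge 0\}$ is a genuine quadrant with two extreme rays, and by linearity of $w\mapsto\sum_i w_if(a_i)$ it suffices to prove the inequality on a generator of each ray.

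Next I would identify the two generators explicitly. The ray $w_{k+2}=0$ consists of weights supported on $a_1,\dots,a_{k+1}$ satisfying the $k$ moment conditions; up to scaling these are precisely the coefficients of the order-$k$ divided difference $[a_1,\dots,a_{k+1};f]$, so I take the generator $u$ with $\sum_i u_if(a_i)=[a_1,\dots,a_{k+1};f]$ and $u_1=\bigl(\prod_{j\ne 1}(a_1-a_j)\bigr)^{-1}>0$. Symmetrically, the ray $w_1=0$ is generated by the $v$ with $\sum_i v_if(a_i)=[a_2,\dots,a_{k+2};f]$, whose weight at the smallest node satisfies $(-1)^kv_{k+2}>0$. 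The point of the construction is that each generator is a single $(k+1)$-point divided difference, hence nonnegative for every $f$ with $f^{(k)}\ge 0$ by the classical mean value theorem for divided differences (equivalently, by the degenerate instance of Corollary \ref{smooth2} in which $k$ of the nodes on each side coincide).

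Finally I would assemble the pieces. Since $(w_1,w_{k+2})$ determine $w$ inside $W$ and $(u_1,u_{k+2})=(u_1,0)$, $(v_1,v_{k+2})=(0,v_{k+2})$ are linearly independent, every $w\in W$ equals $c_uu+c_vv$ with $c_u=w_1/u_1$ and $c_v=w_{k+2}/v_{k+2}$. Here the hypothesis $w_1\ge 0$ gives $c_u\ge 0$, while $(-1)^kw_{k+2}\ge 0$ together with $(-1)^kv_{k+2}>0$ gives $c_v=\bigl((-1)^kw_{k+2}\bigr)/\bigl((-1)^kv_{k+2}\bigr)\ge 0$; hence $\sum_i w_if(a_i)=c_u[a_1,\dots,a_{k+1};f]+c_v[a_2,\dots,a_{k+2};f]\ge 0$.

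The main obstacle is the bookkeeping of signs: one must verify that the two boundary conditions $w_1\ge 0$ and $(-1)^kw_{k+2}\ge 0$ are exactly the conditions expressing $w$ as a \emph{nonnegative} combination of the two nonnegative generators, and in particular that the endpoint weight of each divided-difference generator carries the sign dictated by the position of its retained node. A secondary technical point is the nondegeneracy step ensuring $(w_1,w_{k+2})$ is a coordinate system on $W$, which is what legitimises the reduction to just two extreme rays. As a consistency check one can also run the alternative route through Theorem \ref{counting}: the remark following it already forces at least $k-1$ sign changes in the partial sums $w_1,\dots,w_1+\cdots+w_{k+2}$, and the sign hypotheses should pin this down to exactly $k-1$, matching the second bullet of Theorem \ref{counting}.
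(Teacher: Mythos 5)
Your proof is correct, and it takes a genuinely different route from the paper. The paper derives Corollary \ref{gensen2} from Corollary \ref{smooth2} by a perturbation argument: it constructs monotone differentiable paths $p_i$ with $p_i(0)=a_i$ along which the power sums $\sum_i p_i(x)^j$ are constant for $1\le j<k+1$, applies Corollary \ref{smooth2} to conclude that $x\mapsto\sum_i F(p_i(x))$ is nondecreasing for $F^{(k+1)}\ge 0$, and then differentiates at $x=0$, identifying $p_i'(0)$ with $w_i$ by the same linear-independence observation you make. You instead work directly in the two-dimensional solution space $W$ of the moment conditions, show that $(w_1,w_{k+2})$ coordinatizes $W$, identify the two boundary rays of the cone with the divided differences $[a_1,\dots,a_{k+1};f]$ and $[a_2,\dots,a_{k+2};f]$, verify the signs $u_1>0$ and $(-1)^kv_{k+2}>0$, and write $w$ as a nonnegative combination. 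Your route is cleaner in one respect: the paper's proof quietly assumes the existence of the monotone paths $p_i$ (an implicit-function-theorem step it never justifies), whereas your decomposition is pure linear algebra plus the nonnegativity of a $(k+1)$-point divided difference; it also makes explicit the observation in Remark \ref{counter} that for $n\le k+2$ the inequality is equivalent to the divided-difference criterion of Theorem \ref{Big Hammer}. What the paper's route buys is a uniform narrative in which the weighted statement is literally the derivative of the unweighted one. One small quibble: your parenthetical claim that positivity of a single divided difference is a degenerate instance of Corollary \ref{smooth2} does not quite work (the divided-difference weights alternate in sign and are not $\pm 1$), so you should instead invoke the mean value theorem for divided differences or, equivalently, Corollary \ref{gensen}, whose elementary inductive proof in the paper is independent of Corollary \ref{gensen2}, so no circularity arises.
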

\begin{proof}
Let $p_1, p_2 ..., p_{k+2}:[-\epsilon, \epsilon] \to \RR$ be monotone, differentiable functions such that $p_i(0) = a_i$, $p_1'(0) = w_1$, $p_{k+2}'(0) = w_{k+2}$, and
\[
\sum_{i=1}^{k+2} p_i(x)^j
\]
is constant for integers $1 \leq j < k+1$. Then, since $p_1$ is increasing and $(-1)^{k+1}p_{k+2}$ is decreasing, we can apply Corollary \ref{smooth2} to see that the function $\alpha:[-\epsilon, \epsilon] \rightarrow \RR$ given by
\[
\alpha(x) = \sum_{i=1}^{k+2} F(p_i(x))
\]
is increasing for any functions $F:[a,b] \to \RR$ with $F^{(k+1)} \geq 0$. Differentiating with respect to $x$ at $0$, we get that
\[
\sum_{i=1}^{k+2} p_i'(0)F'(a_i) \geq 0.
\]
Since the $p_i'(0)$s satisfy the same $k$ independent linear equations as the $w_i$s, $p_1'(0) = w_1$, and $p_{k+2}'(0) = w_{k+2}$, we get that $p_i'(0) = w_i$ for all $i$. Now we just take $F$ such that $F' = f$ to finish the proof.
\end{proof}

Just for fun, let's derive the weighted analogue of Corollary \ref{smooth1} with a completely elementary method (i.e., without depending on Lemma \ref{classification}):

\begin{corollary}\label{gensen}
Given real arguments $a_1 > a_2 > \cdots > a_{k+1} \in [a,b]$ and weights $w_1, w_2, ..., w_{k+1}$  with $w_1 \geq 0$ such that
\[
\sum_{i=1}^{k+1} w_i a_i^j = 0 \text{ for all } 0 \leq j < k,
\]
we have
\[
\sum_{i=1}^{k+1} w_i f(a_i) \geq 0
\]
for all functions $f:[a,b] \to \RR$ with $f^{(k)} \geq 0$.
\end{corollary}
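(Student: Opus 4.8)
The plan is to identify the functional $L(f) := \sum_{i=1}^{k+1} w_i f(a_i)$ as a nonnegative multiple of the $k$-th order divided difference $[a_1, \ldots, a_{k+1}; f]$ and then read off its sign from the mean value theorem for divided differences, avoiding the integral representation of Lemma \ref{classification} entirely.

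First I would check that the $k$ constraints $\sum_{i=1}^{k+1} w_i a_i^j = 0$ for $0 \le j < k$ cut out a one-dimensional space of weight vectors $(w_1, \ldots, w_{k+1})$. This is immediate from the fact that the coefficient matrix $\bigl(a_i^{\,j}\bigr)_{0 \le j < k,\,1 \le i \le k+1}$ has rank $k$: any $k$ of its columns form an invertible Vandermonde matrix, since the $a_i$ are distinct.

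Next I would produce an explicit nonzero solution. Using the closed form
\[
[a_1, \ldots, a_{k+1}; f] = \sum_{i=1}^{k+1} \frac{f(a_i)}{\prod_{j \neq i}(a_i - a_j)},
\]
together with the standard fact that this functional annihilates every polynomial of degree $< k$, the weights $\tilde w_i := 1/\prod_{j \neq i}(a_i - a_j)$ satisfy exactly the same constraints as the $w_i$. By one-dimensionality there is a constant $C$ with $w_i = C \tilde w_i$ for all $i$. Because $a_1 > a_2 > \cdots > a_{k+1}$, the product $\prod_{j \neq i}(a_i - a_j)$ has sign $(-1)^{i-1}$; in particular $\tilde w_1 > 0$, so the hypothesis $w_1 \ge 0$ forces $C \ge 0$ (if $C = 0$ the claim is trivial). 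This sign bookkeeping is the one place where the ordering of the arguments is genuinely used, so it is where I would be most careful.

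It then remains to invoke the mean value theorem for divided differences: since $f^{(k)}$ exists, $[a_1, \ldots, a_{k+1}; f] = f^{(k)}(\xi)/k!$ for some $\xi$ in the smallest interval containing the $a_i$. This is the crux of the argument, and I expect it to be the main obstacle to a fully self-contained write-up; fortunately it is itself elementary, following from applying Rolle's theorem $k$ times to the difference between $f$ and its interpolating polynomial of degree $\le k$ at the points $a_1, \ldots, a_{k+1}$. Combining everything,
\[
\sum_{i=1}^{k+1} w_i f(a_i) = C\,[a_1, \ldots, a_{k+1}; f] = \frac{C}{k!}\,f^{(k)}(\xi) \ge 0,
\]
since $C \ge 0$ and $f^{(k)} \ge 0$, which is what we wanted.
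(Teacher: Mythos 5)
Your proof is correct, but it takes a different route from the paper's. You identify the solution space of the moment conditions as the one-dimensional span of the divided-difference weights $\tilde w_i = 1/\prod_{j\neq i}(a_i-a_j)$, pin down the sign of the proportionality constant from $w_1 \ge 0$ and the ordering of the $a_i$, and then invoke the mean value theorem for divided differences, $[a_1,\ldots,a_{k+1};f] = f^{(k)}(\xi)/k!$. All three steps check out: the Vandermonde rank argument, the sign $(-1)^{i-1}$ of $\prod_{j\neq i}(a_i-a_j)$, and the Rolle-based mean value theorem are each standard and correctly applied. The paper instead proves the corollary by induction on $k$: after translating so that $a_{k+1}=0$, it sets $h(t)=\sum_i w_i f(ta_i)$ and observes that $h'(t)=\sum_{i=1}^{k}(w_ia_i)f'(ta_i)$ satisfies the hypotheses of the corollary for $k-1$, so $h$ is nondecreasing from $h(0)=0$. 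Interestingly, the paper itself remarks immediately afterward that the corollary ``is, in fact, a restatement of the fact that $[a_1,\ldots,a_{k+1};f]$ is positive for functions with positive $k$th derivative,'' so your argument is essentially an expansion of that remark into a proof, whereas the paper's stated goal for this corollary was a ``completely elementary'' derivation. Your version buys brevity and makes the connection to divided differences explicit, at the cost of importing the interpolation-error/Rolle machinery behind the mean value theorem for divided differences; the paper's induction is longer but self-contained, using nothing beyond the monotonicity of functions with nonnegative derivative.
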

\begin{proof}
We use induction on $k$. When $k = 1$, the Corollary follows trivially from the fact that $f$ is increasing, while for $k = 2$ it is just a restatement of the weighted Jensen inequality on two variables. Assume that it is true for $k-1$. We can assume that $a_{k+1} = 0$ without loss of generality. Now, define a new function $h:[a,b] \to \RR$ by
\[
h(t) := \sum_{i=1}^{k+1} w_i f(ta_i).
\]
Since $h(0) = 0$, it is enough to show that $h'(t) \geq 0$ for $0 \leq t \leq 1$. But $h'(t)$ is just
\[
\sum_{i=1}^k w_ia_i f'(ta_i),
\]
since $a_{k+1} = 0$. Now we see that the arguments $ta_1 \geq ta_2 \geq \cdots \geq ta_k$, the weights $w_1a_1, w_2a_2, ..., w_ka_k$ and the function $f'$ satisfy the conditions of the Corollary for $k-1$, so we can apply the induction hypothesis to see that $h'(t) \geq 0$.
\end{proof}

Corollary \ref{gensen} is, in fact, a restatement of the fact that $[a_1, ..., a_{k+1}; f]$ is positive for functions with positive $k$th derivative. Using only this fact, one can prove the following theorem, which can be found in Popoviciu's book \cite{convex}:

\begin{theorem}\label{Big Hammer}
Given real arguments $a_1 > a_2 > \cdots > a_n \in [a,b]$, real weights $w_1, w_2, ..., w_n$, and a given integer $k$ such that
\[
\sum_{i=1}^n w_i a_i^j = 0 \text{ for all } 0 \leq j < k \text{, and}
\]
\[
\sum_{i=1}^j w_i (a_i - a_{j+1})(a_i - a_{j+2})\cdots(a_i - a_{j+k-1}) \geq 0
\text{ for all } 1 \leq j \leq n-k,
\]
we have
\[
\sum_{i=1}^n w_i f(a_i) \geq 0
\]
for all functions $f:[a,b] \to \RR$ with $f^{(k)} \geq 0$.
\end{theorem}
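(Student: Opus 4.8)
The plan is to induct on $n$, at each step peeling off the largest node $a_1$ by subtracting a suitable nonnegative multiple of the divided difference $[a_1,\dots,a_{k+1};f]$, which is nonnegative whenever $f^{(k)}\ge 0$ by Corollary \ref{gensen}. Write $L[f]:=\sum_{i=1}^n w_i f(a_i)$, and for $1\le j\le n-k$ abbreviate $Q_j(x):=\prod_{\ell=1}^{k-1}(x-a_{j+\ell})$ and $S_j:=\sum_{i=1}^j w_i Q_j(a_i)$, so that the second hypothesis is exactly $S_j\ge 0$. The base case $n\le k$ is trivial: the moment conditions $\sum_i w_i a_i^{\,j}=0$ for $0\le j<k$ form a full-rank Vandermonde system in the $w_i$ (the nodes are distinct), forcing $w\equiv 0$ and hence $L\equiv 0$.

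For the inductive step ($n\ge k+1$) set $\pi_1(x):=\prod_{m=1}^{k+1}(x-a_m)$ and $\lambda_1:=w_1\,\pi_1'(a_1)=w_1\prod_{m=2}^{k+1}(a_1-a_m)$. Since $S_1=w_1\prod_{m=2}^{k}(a_1-a_m)\ge 0$ by the $j=1$ instance of the hypothesis and $a_1-a_{k+1}>0$, we get $\lambda_1=(a_1-a_{k+1})S_1\ge 0$. Now consider $L'[f]:=L[f]-\lambda_1[a_1,\dots,a_{k+1};f]$. Because the coefficient of $f(a_1)$ in $[a_1,\dots,a_{k+1};f]$ is $1/\pi_1'(a_1)$, the node $a_1$ drops out of $L'$, so $L'$ is a functional supported on $a_2>\cdots>a_n$; and since both $L$ and the divided difference annihilate all polynomials of degree $<k$, so does $L'$, i.e. the moment conditions persist for the $(n-1)$-node system. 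If the remaining hypothesis also persists, the induction hypothesis gives $L'[f]\ge 0$, and then $L[f]=\lambda_1[a_1,\dots,a_{k+1};f]+L'[f]\ge 0$ closes the step.

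The heart of the argument, and the step I expect to require the most care, is verifying that the second hypothesis survives peeling with its index merely shifted by one. The key tool is a dual form of $S_j$: since $\deg Q_j=k-1<k$, the moment conditions give $\sum_{i=1}^n w_i Q_j(a_i)=0$, and because $Q_j$ vanishes at $a_{j+1},\dots,a_{j+k-1}$ this collapses to $S_j=-\sum_{i\ge j+k} w_i Q_j(a_i)$. Let $w_i'$ denote the weights of $L'$; subtracting $\lambda_1[a_1,\dots,a_{k+1};f]$ alters only $w_2,\dots,w_{k+1}$, so $w_i'=w_i$ for $i\ge k+2$. Moreover the polynomial $Q_j$ built from the peeled node list $a_2,\dots,a_n$ is exactly the original $Q_{j+1}$. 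Applying the dual form inside the peeled system, every node index that appears is $\ge k+2$, where the weights are unchanged, so the peeled quantity equals $-\sum_{i\ge j+k+1} w_i Q_{j+1}(a_i)=S_{j+1}\ge 0$. Thus the second hypothesis holds for the $(n-1)$-node system; in particular its $j=1$ instance forces the new leading weight to be nonnegative, exactly as the induction requires. The main obstacle is purely this bookkeeping of the peeled weights, together with keeping the index shifts straight; everything else is linear algebra plus a single positivity fact.

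It remains only to justify that $[a_1,\dots,a_{k+1};f]\ge 0$ for $f^{(k)}\ge 0$: the divided-difference weights $1/\pi_1'(a_i)$ satisfy $\sum_i a_i^{\,j}/\pi_1'(a_i)=[a_1,\dots,a_{k+1};x^j]=0$ for $0\le j<k$, with positive leading weight $1/\pi_1'(a_1)=1/\prod_{m=2}^{k+1}(a_1-a_m)>0$, so this is precisely Corollary \ref{gensen}. I would finally double-check the edge case $n=k+1$, where $L'$ is supported on $k$ nodes and therefore vanishes identically by the base case, so the recursion bottoms out exactly at $L[f]=\lambda_1[a_1,\dots,a_{k+1};f]\ge 0$.
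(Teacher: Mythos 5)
Your proof is correct and takes essentially the same route as the paper: the paper's proof rests on the identity $\sum_j w_j f(a_j) = \sum_{j=1}^{n-k}(a_j-a_{j+k})\bigl(\sum_{i=1}^j w_i(a_i-a_{j+1})\cdots(a_i-a_{j+k-1})\bigr)[a_j,\dots,a_{j+k};f]$, whose proof it leaves as an exercise, and your peeling induction is precisely that identity unrolled one term at a time (your $S_j'=S_{j+1}$ bookkeeping is the content of the exercise). The only difference is that you supply the verification the paper omits, using the positivity of $[a_1,\dots,a_{k+1};f]$ via Corollary \ref{gensen} exactly as the paper indicates.
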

\begin{proof}
The theorem follows directly from the following identity:
\[
\sum_{j=1}^n w_j f(a_j) = \sum_{j=1}^{n-k}(a_j-a_{j+k})
\left(\sum_{i=1}^j w_i (a_i - a_{j+1})\cdots(a_i - a_{j+k-1})\right)[a_j, ..., a_{j+k}; f].
\]
The proof of this identity is left as an exercise to the reader.
\end{proof}

\begin{remark}\label{counter}
When $k = 2$, Theorem \ref{Big Hammer} is equivalent to what Darij Grinberg calls the weighted Karamata inequality \cite{darij}. Plugging in the functions $f(x) = (x-a_{j+1})_+$, we see that in the case $k = 2$, the condition given is both necessary and sufficient.

Since the condition from Corollary \ref{gensen2} is both necessary and sufficient, and equivalent to the condition in Theorem \ref{Big Hammer} whenever $n \leq k+2$, we can easily see that any inequality not following from Theorem \ref{Big Hammer} must have $k \geq 3$, $n \geq k+3 \geq 6$. A simple example of such an inequality with $k = 3$, $n = 6$ is the following:
\[
f(6) - 3f(5) + 3f(4) - 3f(2) + 3f(1) - f(0) \geq 0,
\]
which, though it doesn't satisfy the conditions of Theorem \ref{Big Hammer}, can easily be seen to be true by adding $-f(3) + f(3)$ to the left hand side, giving the equivalent inequality $6[6, 5, 4, 3; f] + 6[3, 2, 1, 0; f] \geq 0$.
\end{remark}

The condition given in Theorem \ref{Big Hammer}, although easier to check than the condition of Lemma \ref{auxiliaries}, is still inconvenient because of the need to order all of the variables involved. On the other hand, to apply inequalities such as the Karamata inequality or the Fuchs inequality \cite{fuchs}, one only needs to order the variables occurring on each side of the inequality before applying the corresponding theorem. The next theorem is an attempt to find a simpler condition for inequalities on functions with nonnegative third derivative.

\begin{theorem}\label{small hammer}
Given weights $w_1, ..., w_n$, real numbers $a_1 \geq \cdots \geq a_n \in [a,b]$, and real numbers $b_1 \geq \cdots \geq b_n \in [a,b]$ such that $\min(a_i, b_i) \geq \max(a_{i+1}, b_{i+1})$,
\[
\sum_{i=1}^n w_ia_i = \sum_{i=1}^n w_ib_i \text{, }
\sum_{i=1}^n w_ia_i^2 = \sum_{i=1}^n w_ib_i^2 \text{, and}
\]
\[
\sum_{i=1}^j w_i(a_i-a_{j+1})(a_i-b_{j+1}) \geq \sum_{i=1}^j w_i(b_i-a_{j+1})(b_i-b_{j+1})
\]
for all $1 \leq j < n$, we have
\[
\sum_{i=1}^n w_if(a_i) \geq \sum_{i=1}^n w_if(b_i).
\]
for all functions $f:[a,b] \to \RR$ with $f''' \ge 0$.
\end{theorem}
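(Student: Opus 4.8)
The plan is to apply Lemma \ref{auxiliaries} to the combined system consisting of the $2n$ arguments $a_1,\dots,a_n,b_1,\dots,b_n$ with weights $w_1,\dots,w_n,-w_1,\dots,-w_n$. For this system the claimed inequality is exactly $\sum(\text{weight})f(\text{argument})\ge 0$, and for $k=3$ the moment conditions of Lemma \ref{auxiliaries} for $j=0,1,2$ are precisely $0=0$ together with the two stated equalities $\sum w_ia_i=\sum w_ib_i$ and $\sum w_ia_i^2=\sum w_ib_i^2$. So everything reduces to showing that
\[
r_3(x)=\sum_{i=1}^n w_i\bigl((a_i-x)_+^2-(b_i-x)_+^2\bigr)\ge 0\quad\text{for all }x.
\]

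First I would record the piecewise structure of $r_3$ forced by the interleaving hypothesis $\min(a_i,b_i)\ge\max(a_{i+1},b_{i+1})$. Writing $\mu_m=\tfrac12(a_m+b_m)$ for the block midpoints, which decrease in $m$, and
\[
L_j(x)=\sum_{i=1}^j w_i\bigl((a_i-x)^2-(b_i-x)^2\bigr),
\]
the cancellation of the $x^2$ terms makes each $L_j$ an honest linear function, with $L_n\equiv 0$ by the two moment conditions. On the between-block interval $[\max(a_{m+1},b_{m+1}),\min(a_m,b_m)]$ every pair of index $\le m$ contributes in full and every larger-index pair contributes nothing, so $r_3=L_m$ there; on the within-block interval $[\min(a_m,b_m),\max(a_m,b_m)]$ only the $m$-th pair is split, so $r_3$ is genuinely quadratic and admits the two representations $r_3=L_{m-1}+w_m(a_m-x)^2=L_m+w_m(b_m-x)^2$ (and the analogous pair with $a_m,b_m$ swapped when $b_m>a_m$).

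The computational heart is the observation that the $j$-th instance of the third hypothesis is exactly $L_j(\mu_{j+1})\ge 0$: expanding $(a_i-a_{j+1})(a_i-b_{j+1})-(b_i-a_{j+1})(b_i-b_{j+1})$ gives $(a_i^2-b_i^2)-(a_{j+1}+b_{j+1})(a_i-b_i)$, which is the summand of $L_j$ evaluated at $x=\mu_{j+1}$. A one-line check also gives $L_{m-1}(\mu_m)=L_m(\mu_m)$, since the split term contributes $w_m\delta^2-w_m\delta^2$ at the midpoint. Setting $v_m:=L_{m-1}(\mu_m)=L_m(\mu_m)$, the hypotheses say $v_m\ge 0$ for every $m$; since each linear $L_m$ takes the nonnegative values $v_m$ and $v_{m+1}$ at the consecutive midpoints $\mu_m\ge\mu_{m+1}$, it is nonnegative on all of $[\mu_{m+1},\mu_m]$.

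Finally I would assemble $r_3\ge 0$ interval by interval. The between-block interval lies inside $[\mu_{m+1},\mu_m]$, where $r_3=L_m\ge 0$. For a within-block interval I would split at its midpoint $\mu_m$: on the upper half $[\mu_m,\max(a_m,b_m)]\subseteq[\mu_m,\mu_{m-1}]$ use $r_3=L_{m-1}+w_m(a_m-x)^2$, and on the lower half $[\min(a_m,b_m),\mu_m]\subseteq[\mu_{m+1},\mu_m]$ use $r_3=L_m+w_m(b_m-x)^2$. When the quadratic opens upward, each half is a sum of a nonnegative linear piece and a nonnegative square; when it opens downward, $r_3$ is concave on the interval and hence lies above the chord joining its endpoint values, which are $L_{m-1}$ and $L_m$ evaluated at points of $[\mu_m,\mu_{m-1}]$ and $[\mu_{m+1},\mu_m]$ and so are nonnegative. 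The main obstacle is exactly this within-block step: the hypotheses only control $r_3$ at the midpoints and the $w_m$ may have either sign, so one must exploit that the split point $\mu_m$ is precisely where the two linear pieces agree in order to retain a nonnegative square in each subcase.
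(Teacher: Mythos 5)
Your proof is correct, and it takes a genuinely different route from both of the paper's proofs. The paper's first proof shows that the stated hypotheses (every other condition from Popoviciu's criterion, Theorem \ref{Big Hammer}) imply the missing intermediate conditions, via a case split on the sign of $\sum_{i\le j} w_i(a_i-b_i)$; its second proof applies Abel summation twice and reduces to a local six-point inequality handled by Theorem \ref{counting}. You instead go directly to Lemma \ref{auxiliaries} and verify $r_3\ge 0$ by hand, and the payoff is the identification of the $j$-th hypothesis with $L_j(\mu_{j+1})\ge 0$ --- i.e.\ the hypotheses are precisely the values of the linearized tail sums at the block midpoints, which together with $L_{m-1}(\mu_m)=L_m(\mu_m)$ and linearity pin down $r_3$ on every between-block interval, while the midpoint split (convex case) or endpoint concavity (concave case) handles the within-block intervals. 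This makes your argument self-contained (no appeal to Theorem \ref{Big Hammer} or Theorem \ref{counting}) and explains structurally why the hypotheses take the form they do; the paper's proofs are shorter given the machinery, and Proof 1 additionally exhibits the theorem as a formal consequence of Popoviciu's criterion. One small imprecision: when $b_m>a_m$ the two representations on the within-block interval are $r_3=L_{m-1}-w_m(b_m-x)^2=L_m-w_m(a_m-x)^2$, with a sign flip rather than a literal swap of $a_m$ and $b_m$; your convex/concave dichotomy (keyed to the sign of the leading coefficient, not of $w_m$) still covers both cases, but the parenthetical should be stated that way.
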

\begin{proof}[Proof 1, using Theorem \ref{Big Hammer}]
Notice that the last set of conditions make up every other condition from the second set of conditions of Theorem \ref{Big Hammer}. We can assume without loss of generality that $b_j = \min(a_j, b_j)$, and $a_{j+1} = \max(a_{j+1}, b_{j+1})$ (by swapping them and negating the weights, if necessary). We would like to prove that, given
\begin{align}
\label{eq1}\sum_{i=1}^j w_i(a_i-a_j)(a_i-b_j) \geq& \sum_{i=1}^j w_i(b_i-a_j)(b_i-b_j) \text{ and}\\
\label{eq2}\sum_{i=1}^j w_i(a_i-a_{j+1})(a_i-b_{j+1}) \geq& \sum_{i=1}^j w_i(b_i-a_{j+1})(b_i-b_{j+1}) \text{, we have}\\
\label{eq3}\sum_{i=1}^j w_i(a_i-b_j)(a_i-a_{j+1}) \geq& \sum_{i=1}^j w_i(b_i-b_j)(b_i-a_{j+1}).
\end{align}
Suppose first that we have the inequality
\begin{equation}
\tag{*}\label{*} \sum_{i=1}^j w_ia_i \geq \sum_{i=1}^j w_ib_i,
\end{equation}
and note that it is equivalent to the inequality
\[
(a_j-a_{j+1})\sum_{i=1}^j w_i(a_i-b_j) \geq (a_j-a_{j+1})\sum_{i=1}^j w_i(b_i-b_j).
\]
Adding this to (\ref{eq1}), we get (\ref{eq3}).

Similarly, suppose that we have the opposite inequality
\begin{equation}
\tag{**}\label{**} \sum_{i=1}^j w_ia_i \leq \sum_{i=1}^j w_ib_i,
\end{equation}
and note that it is equivalent to the inequality
\[
(b_{j+1}-b_j)\sum_{i=1}^j w_i(a_i-b_j) \geq (b_{j+1}-b_j)\sum_{i=1}^j w_i(b_i-b_j).
\]
Adding this to (\ref{eq2}), we get (\ref{eq3}).

Since at least one of the two inequalities (\ref{*}), (\ref{**}) is true, (\ref{eq3}) must be true if both (\ref{eq1}) and (\ref{eq2}) are true.
\end{proof}
\begin{proof}[Proof 2]
Applying Abel Summation twice, we find that
\begin{align*}
&\sum_{i=1}^n w_i(f(a_i)-f(b_i))\\
= &\sum_{j=1}^n \left(\sum_{i=1}^j w_ia_i -w_ib_i\right)
\left(\frac{f(a_j)-f(b_j)}{a_j-b_j}-\frac{f(a_{j+1})-f(b_{j+1})}{a_{j+1}-b_{j+1}}\right)\\
= &\sum_{j=1}^n \left(\sum_{i=1}^j w_i(a_i-a_{j+1})(a_i-b_{j+1})-w_i(b_i-a_{j+1})(b_i-b_{j+1})\right) \times\\
&\left(\frac{\frac{f(a_j)-f(b_j)}{a_j-b_j}-\frac{f(a_{j+1})-f(b_{j+1})}{a_{j+1}-b_{j+1}}}{a_j+b_j-a_{j+1}-b_{j+1}}
-\frac{\frac{f(a_{j+1})-f(b_{j+1})}{a_{j+1}-b_{j+1}}-\frac{f(a_{j+2})-f(b_{j+2})}{a_{j+2}-b_{j+2}}}{a_{j+1}+b_{j+1}-a_{j+2}-b_{j+2}}\right).
\end{align*}
Thus, it suffices to show that
\[
\frac{\frac{f(a_j)-f(b_j)}{a_j-b_j}-\frac{f(a_{j+1})-f(b_{j+1})}{a_{j+1}-b_{j+1}}}{a_j+b_j-a_{j+1}-b_{j+1}} \geq
\frac{\frac{f(a_{j+1})-f(b_{j+1})}{a_{j+1}-b_{j+1}}-\frac{f(a_{j+2})-f(b_{j+2})}{a_{j+2}-b_{j+2}}}{a_{j+1}+b_{j+1}-a_{j+2}-b_{j+2}}.
\]
Plugging in $f(x) = 1$, $x$, and $x^2$, we get equality. Assume, as in the first proof, that $a_j \ge b_j \ge \cdots \ge b_{j+2}$, collect everything on the left hand side of the inequality, and let $w_{2i}$ be the weight on $f(a_{j+i})$, and $w_{2i+1}$ be the weight on $f(b_{j+i})$. Since $w_{2i} + w_{2i+1} = 0$, only three of the partial sums of the weights are nonzero, so there are at most 2 sign changes among the partial sums of the weights, and thus by Theorem \ref{counting} we are done.
\end{proof}

The corresponding unweighted inequality (proved by applying the previous theorem with integer weights) is:

\begin{corollary}\label{superize}
Given real numbers $a_1 \geq \cdots \geq a_n$, and real numbers $b_1 \geq \cdots \geq b_n$ such that either $\min(a_i, b_i) \geq \max(a_{i+1}, b_{i+1})$ or $\{a_i, b_i\} = \{a_{i+1}, b_{i+1}\}$ for all $i$,
\[
\sum_{i=1}^n a_i = \sum_{i=1}^n b_i \text{, }
\sum_{i=1}^n a_i^2 = \sum_{i=1}^n b_i^2 \text{, and}
\]
\[
\sum_{i=1}^j (a_i-a_j)(a_i-b_j) \geq \sum_{i=1}^j (b_i-a_j)(b_i-b_j) \text{ for all } j,
\]
we have
\[
\sum_{i=1}^n f(a_i) \ge \sum_{i=1}^n f(b_i).
\]
\end{corollary}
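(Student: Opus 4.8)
The plan is to deduce the corollary from Theorem \ref{small hammer} by taking the weights to be positive integers, so the only real work is to handle the one genuinely new feature of the hypothesis: the alternative $\{a_i,b_i\} = \{a_{i+1},b_{i+1}\}$. The first thing I would check is that this alternative is far less general than it appears. Since $a_i \geq a_{i+1}$ and $b_i \geq b_{i+1}$, equality of the two-element sets $\{a_i,b_i\} = \{a_{i+1},b_{i+1}\}$ forces $a_i = a_{i+1}$ and $b_i = b_{i+1}$: if the orientation were reversed one would need $a_i \geq b_i$ and $b_i \geq a_i$ simultaneously, giving $a_i = b_i = a_{i+1} = b_{i+1}$, again an identical pair. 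Hence the relaxed ordering hypothesis merely permits consecutive pairs $(a_i,b_i)$ to coincide.

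Given this, I would group each maximal run of identical consecutive pairs into a single pair and record its multiplicity. This produces reduced sequences $\tilde a_1 \geq \cdots \geq \tilde a_m$ and $\tilde b_1 \geq \cdots \geq \tilde b_m$ of strictly separated pairs, since between two \emph{distinct} groups the set-equality alternative is impossible and so $\min(\tilde a_j,\tilde b_j) \geq \max(\tilde a_{j+1},\tilde b_{j+1})$ must hold, together with positive integer weights $w_j$ equal to the multiplicities. Because a weighted sum over the reduced indices is exactly the corresponding unweighted sum over the original indices, the two moment conditions $\sum_i a_i = \sum_i b_i$ and $\sum_i a_i^2 = \sum_i b_i^2$ become $\sum_j w_j \tilde a_j = \sum_j w_j \tilde b_j$ and $\sum_j w_j \tilde a_j^2 = \sum_j w_j \tilde b_j^2$, while the target conclusion $\sum_j w_j f(\tilde a_j) \geq \sum_j w_j f(\tilde b_j)$ is literally $\sum_i f(a_i) \geq \sum_i f(b_i)$.

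The one point requiring care is the third, partial-sum hypothesis, because the corollary states it with the shift at index $j$ whereas Theorem \ref{small hammer} states it with the shift at index $j+1$. Here I would exploit the fact that in $\sum_{i=1}^j (a_i-a_j)(a_i-b_j) - \sum_{i=1}^j (b_i-a_j)(b_i-b_j)$ the $i=j$ summand vanishes identically, so this quantity is unchanged if the top index of summation is lowered to $j-1$; this identifies the corollary's condition at index $j$ with Theorem \ref{small hammer}'s condition at index $j-1$ (the $j=1$ case being the trivial $0 \geq 0$). Applying the same observation at the reduced level, and using that each reduced partial sum with its integer weights equals an original partial sum taken up to the last element of a group, I would match the corollary's condition, assumed at \emph{every} original index, with the condition that Theorem \ref{small hammer} demands at every group boundary.

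I would expect the main obstacle to be precisely this index bookkeeping rather than anything analytic, since all the analytic content is packaged inside Theorem \ref{small hammer}. The subtlety is purely in confirming that the partial-sum inequalities survive both the grouping into integer weights and the shift of indexing convention; once the three hypotheses are verified for the reduced weighted data, a single invocation of Theorem \ref{small hammer} completes the proof.
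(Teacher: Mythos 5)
Your proposal is correct and is exactly the paper's intended argument: the paper proves Corollary \ref{superize} simply ``by applying the previous theorem with integer weights,'' and your write-up supplies precisely the missing details (collapsing runs of identical pairs into multiplicities, and the index shift coming from the vanishing $i=j$ term). No gaps.
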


\begin{example}
The inequality $f(11) + f(8) + f(8) + f(7) + f(3) + f(1) \geq f(10) + f(10) + f(6) + f(6) + f(6) + f(0)$, for functions with $f''' \ge 0$, doesn't follow directly from Corollary \ref{superize} (because $(a_3, b_3) = (8, 6)$ and $(a_4, b_4) = (7, 6)$), but if we add $f(7)$ to both sides, then we can apply Corollary \ref{superize} to check that it is true.
\end{example}

\section{Maximal and minimal expressions}

Fix a number $k \ge 3$, a number $n \ge k$, and reals $s_1, ..., s_{k-1}$, and let $S_k$ be the set $\{(x_1, ..., x_n) \in \mathbb{R}^n | x_1 \ge x_2 \ge \cdots \ge x_n, \sum_{i=1}^n x_i^j = s_j \text{ for } j = 1, ..., k-1\}$, with the topology induced from $\mathbb{R}^n$. We can define a partial ordering on $S$ by

\begin{definition}
Let $a = (a_1, ..., a_n)$, $b = (b_1, ..., b_n)$, then we say $a \succ_k b$ if, for all functions $f$ with $f^{(k)} \ge 0$, we have
\[
\sum_{i=1}^n f(a_i) \ge \sum_{i=1}^n f(b_i).
\]
\end{definition}

An immediate question that comes to mind about this ordering is this one: are there elements $x_{min}, x_{max} \in S_k$ such that for all $x \in S_k$, $x_{max} \succ_k x \succ_k x_{min}$? If so, what do they look like? Since any $x$ determines the set $S_k$ containing it, we speak of maximal and minimal $x$ without specifying $S_k$ explicitly (other than mentioning the value of $k$).

First, let's see what happens when $n = k$:

\begin{lemma}\label{1d}
If $x \in S_k$ locally maximizes the function $\sum_{i=1}^k x_i^k$, then there exists an $i$ such that $x_{2i} = x_{2i+1}$, and if $x$ locally minimizes it then there exists an $i$ such that $x_{2i-1} = x_{2i}$.
\end{lemma}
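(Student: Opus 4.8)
The plan is to treat this as a constrained optimization problem and argue in two stages: first, that no extremum of $\sum_i x_i^k$ can occur at an interior point of $S_k$ (one with $x_1 > \cdots > x_k$ strict), and second, that the forced boundary collision must have the stated parity.

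For the first stage I would use Lagrange multipliers. Writing $p_j(x) = \sum_i x_i^j$, at an interior point the gradients $\nabla p_1, \dots, \nabla p_{k-1}$ are rows of a Vandermonde-type matrix in the distinct values $x_i$, hence independent, so $S_k$ is a smooth curve there and at a local extremum of $p_k$ we need $\nabla p_k \in \mathrm{span}(\nabla p_1, \dots, \nabla p_{k-1})$. Written coordinatewise, this says that every $x_i$ is a root of the single polynomial $g(t) = k t^{k-1} - \sum_{j=1}^{k-1} \lambda_j j t^{j-1}$, which has degree exactly $k-1$. But the $k$ values $x_1, \dots, x_k$ are distinct, so $g$ would have $k$ distinct roots -- impossible. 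Hence there are no interior extrema, and any local maximum or minimum lies on the boundary, where $x_m = x_{m+1}$ for some $m$.

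The second stage is to pin down which collision occurs. Here I would encode a tuple by its characteristic polynomial $P(t) = \prod_{i=1}^k (t - x_i)$. Fixing $p_1, \dots, p_{k-1}$ fixes, via Newton's identities, every coefficient of $P$ except the constant term, so the feasible tuples near a given one are exactly the vertical shifts $P - \Delta h$ that still have all real roots, and the Newton identity for $p_k$ gives $p_k = \mathrm{const} + k\,\Delta h$, i.e. $p_k$ is affine and \emph{increasing} in the shift $\Delta h$. At a boundary point the collision value $v = x_m = x_{m+1}$ is a double root, so $P(v) = P'(v) = 0$ and locally $P(t) \approx \tfrac12 P''(v)(t-v)^2 - \Delta h$; the two colliding roots stay real precisely when $\Delta h$ has the sign of $P''(v)$. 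Thus if $P''(v) > 0$ (so $P$ has a local minimum, of value $0$, at $v$) the feasible direction has $\Delta h > 0$, so $p_k$ increases and the point is a local \emph{minimum}; if $P''(v) < 0$ it is a local \emph{maximum}.

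Finally I would read off the parity from the same sign of $P''(v)$. Since $P(t) \to +\infty$ as $t \to +\infty$, and the number $r$ of roots exceeding $v$ is (generically) the number of simple roots in $(v, \infty)$, comparing the sign of $P$ just to the right of $v$ -- which equals the sign of $P''(v)$ -- with the sign at $+\infty$ shows that $r$ is even in the convex case and odd in the concave case. The colliding pair then sits at positions $(r+1, r+2)$: even--odd (so $x_{2i} = x_{2i+1}$) at a local maximum, and odd--even (so $x_{2i-1} = x_{2i}$) at a local minimum, which is exactly the claim. The hard part will be the bookkeeping that links the concavity $P''(v)$ to the parity of the collision, together with the degenerate configurations (a triple-or-higher collision, or $P''(v) = 0$) that the generic count ignores; these I would dispatch by a small perturbation of $(s_1, \dots, s_{k-1})$, or by invoking the total-order structure on $S_k$ furnished by Corollary \ref{smooth1}, which already forces the geometry to be as simple as the generic picture suggests.
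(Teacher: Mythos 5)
Your proposal is correct, and its second stage is precisely the paper's own proof: vary the constant term of $\prod_i(t-x_i)$ (your vertical shift $\Delta h$), use Newton's identities to see that $\sum_i x_i^k$ is strictly monotone in that shift, and note that the only obstruction to shifting in a given direction is a repeated root of the appropriate concavity, whose position has the stated parity by counting (with multiplicity) the roots above it. Two small remarks: your Lagrange-multiplier first stage is subsumed by this (if all roots are simple the shift is unobstructed in both directions, so the point is not a local extremum), and the degenerate cases you defer are actually trivial --- a root of multiplicity exactly two never has $P''(v)=0$, and a root of multiplicity at least three yields adjacent equal pairs of both parities, so both conclusions hold there and no perturbation argument is needed.
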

\begin{proof}
Consider the polynomial $p(t) = (t-x_1)(t-x_2)\cdots(t-x_k)$. If we vary the constant term of this polynomial while keeping the other coefficients fixed, then as long as it still has $k$ real roots $y_1, ..., y_n$ we will have $y \in S_k$ and $y \succ_k x$ iff the constant term was increased (by Corollary \ref{smooth1} and Newton's identities). Then we can increase the constant term while keeping all of the roots real iff there are no double roots at which the second derivative of the polynomial is $\ge 0$, and can decrease the constant term iff there are no double roots at which the second derivative is $\le 0$, and these conditions are clearly equivalent to the conditions given in the statement of the Lemma.
\end{proof}

Note that if an element $x \in S_k$ is maximal with respect to our ordering (no longer restricting ourselves to the case $n = k$), then for any $k$ integers $1 \le i_1 < \cdots < i_k \le n$, $x' = (x_{i_1}, ..., x_{i_k})$ must be maximal in the set $S_k'$ containing it, and thus by Lemma \ref{1d} and Corollary \ref{smooth1}, there is some integer $j$ such that $x_{i_{2j}} = x_{i_{2j+1}}$ (the corresponding statement is also true for minimal $x$, with the roles of even and odd indices reversed). Now, we can classify all maximal elements as follows:

\begin{theorem}\label{extreme}
For any $x \in S_k$, the following conditions are equivalent:
\begin{enumerate}
\item $x$ locally maximizes the function $\sum_{i=1}^n x_i^k$ in $S_k$.
\item There exist integers $1 = i_1 \le \cdots \le i_k = n+1$ such that $x_{i_j} = x_{i_j+1} = \cdots = x_{i_{j+1}-1}$ and $i_{2j} - i_{2j-1} \in \{0, 1\}$ for each $j$.
\item For all $y \in S_k$, $x \succ_k y$.
\end{enumerate}
\end{theorem}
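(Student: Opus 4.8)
The plan is to prove the cycle $(3)\Rightarrow(1)\Rightarrow(2)\Rightarrow(3)$. The implication $(3)\Rightarrow(1)$ is immediate: taking $f(x)=x^k$ (which has $f^{(k)}=k!\ge 0$) in condition (3) gives $\sum_i x_i^k\ge\sum_i y_i^k$ for every $y\in S_k$, so $x$ is in fact a global maximizer of $\sum_i x_i^k$, a fortiori a local one.

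For $(1)\Rightarrow(2)$ I would first localize to $k$-element subsequences. If $x$ locally maximizes $\sum_i x_i^k$ on $S_k$, then for any indices $i_1<\cdots<i_k$ the subsequence $(x_{i_1},\dots,x_{i_k})$ locally maximizes $\sum_j x_{i_j}^k$ on the corresponding set $S_k'$: perturbing this subsequence within its own $S_k'$ (fixing its $k-1$ power sums) while freezing the other coordinates keeps all global power sums intact and changes $\sum_i x_i^k$ only through the subsequence. Hence Lemma \ref{1d} produces an index with $x_{i_{2j}}=x_{i_{2j+1}}$, so every $k$-subsequence of $x$ has an ``even repeat''. It then remains to prove the purely combinatorial statement that a weakly decreasing sequence all of whose $k$-subsequences have an even repeat must admit the block decomposition of (2); I expect to handle this by a routine greedy/extremal argument, building a violating subsequence out of the strict descents of $x$ whenever the decomposition fails, and I do not anticipate difficulty here.

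The substantial step is $(2)\Rightarrow(3)$, and I would argue by compactness and maximization rather than by the sign-counting of Theorem \ref{counting}, since for pathological $y$ the combined partial-sum sequence can have more than $k-1$ sign changes even when the inequality still holds. First, $S_k$ is compact (for $k\ge 3$ the constraint $\sum_i x_i^2=s_2$ bounds every coordinate), so for each fixed $f$ with $f^{(k)}\ge 0$ the continuous functional $y\mapsto\sum_i f(y_i)$ attains a maximum at some $z\in S_k$. The same localization as above shows each $k$-subsequence of $z$ maximizes the corresponding $\sum_j f(z_{i_j})$ on its $S_k'$; but on $k$-tuples the relation $\succ_k$ is a \emph{total} order (Corollary \ref{smooth1}), along which every such $\sum f$ is monotone by the very definition of $\succ_k$, and $S_k'$ is a one-dimensional compact curve parametrized monotonically by this order. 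Thus the maximum is attained at the order-top, which by Lemma \ref{1d} has an even repeat, so $z$ has even repeats in all $k$-subsequences and therefore satisfies (2). The proof then concludes by identifying $z$ with the given (2)-element $x$, after which $\sum_i f(x_i)=\sum_i f(z_i)\ge\sum_i f(y_i)$ for all $y$, and letting $f$ range over all admissible functions yields $x\succ_k y$.

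The main obstacle is exactly this identification, i.e.\ the \emph{uniqueness} of a configuration of type (2) inside a fixed $S_k$. The block structure forces at most $k-1$ distinct values with a rigid multiplicity pattern, so I would establish uniqueness by showing that the moment map sending such a constrained configuration to its power-sum vector $(s_1,\dots,s_{k-1})$ is injective — a Vandermonde-type computation in which the constraint $i_{2j}-i_{2j-1}\in\{0,1\}$ is precisely what makes the number of free value-parameters match the number of constraints. An alternative to this identification, in the spirit of the deformation argument proving Corollary \ref{gensen2}, would be to connect $z$ to $x$ by a path of admissible moves each justified by Corollary \ref{smooth2}; either way, pinning down the extremal configuration uniquely is the crux of the argument.
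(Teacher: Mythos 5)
Your $(3)\Rightarrow(1)$ and the localization-plus-Lemma~\ref{1d} half of $(1)\Rightarrow(2)$ match the paper (though the paper actually carries out the combinatorial step you defer, via an explicit greedy construction of the indices $i_j$, with a contradiction if $i_k\ne n+1$; that step is short but should not be omitted). The real problem is $(2)\Rightarrow(3)$. You dismiss the route through Theorem~\ref{counting} on the grounds that ``for pathological $y$ the combined partial-sum sequence can have more than $k-1$ sign changes,'' but this is exactly backwards: the entire content of the paper's proof is that the block structure in condition (2) forces the partial sums of the $\pm1$ weights to have at most $k-1$ sign changes for \emph{every} $y\in S_k$. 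Since the partial sums are integers moving in unit steps, a zero-crossing requires two consecutive weights of the same sign, and the constraint $i_{2j}-i_{2j-1}\in\{0,1\}$ rations the runs of consecutive $+1$'s so that $sc(i_2)\le 1$ and $sc(i_{2j+2})\le sc(i_{2j})+2$, giving $sc(i_k)\le k-1$; the remaining hypothesis $x_1>y_1$ follows from the remark after Theorem~\ref{counting}. So the route you reject is the one that works, with no pathology to avoid.

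Your replacement argument has two unclosed gaps. First, for a degenerate admissible $f$ (e.g.\ any polynomial of degree $<k$, for which $\sum_i f(y_i)$ is constant on $S_k$) the maximizer $z$ of $\sum_i f$ is not forced to the order-top of each $k$-subsequence's $S_k'$, so the conclusion that $z$ satisfies (2) fails; this is repairable by perturbing to $f+\epsilon x^k$ and letting $\epsilon\to0$, but you do not do so. Second, and more seriously, the identification $z=x$ rests on uniqueness of type-(2) configurations within a fixed $S_k$, which you correctly call the crux and then leave unproved. The paper's own uniqueness proof (Corollary~\ref{connect}) tests two maximal elements against $f(z)=(z-\tfrac{x_i+y_i}{2})_+^{k-1}$, i.e.\ it \emph{uses} condition (3), so invoking it here would be circular; a direct ``Vandermonde-type'' injectivity argument must compare configurations with different multiplicity patterns $(\ell_1,\dots,\ell_{k-1})$, not just perturb within one pattern, and is not a routine computation. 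As it stands the proposal does not constitute a proof of $(2)\Rightarrow(3)$.
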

\begin{proof}
$(1) \implies (2)$ For any integer $i$, let $s(i)$ be the smallest integer larger than $i$ such that $x_{s(i)} \ne x_i$ (or $n+1$ if no such number exists). We inductively form the sequence $i_1, ..., i_k$ by setting $i_1 = 1$, and $i_{j+1} = s(i_j)$ unless $j$ is odd and $s(i_j) \le n$, in which case we set $i_{j+1} = i_j+1$. Then, if $i_k \ne n+1$, we immediately see that the vector $(x_{i_1}, ..., x_{i_k})$ is not maximal in the set $S_k'$ containing it, a contradiction. The sequence $i_1, ..., i_k$ satisfies the required conditions.

$(2) \implies (3)$ Surprisingly, this is nothing more than an application of Theorem \ref{counting}! Let $sc(i)$ be the number of sign changes in the partial sums of the weights up to (but not including) the first partial sum containing the weight corresponding to $x_i$. Then it's easy to check that $sc(i_2) \le 1$, $sc(i_{2j+2}) \le sc(i_{2j}) + 2$, and, if k is odd, $sc(i_k) \le sc(i_{k-1})+1$ (they all follow from the fact that the partial sums are integers, so it takes at least two consecutive weights of the same sign for the partial sum to cross $0$). Thus, $sc(i_k) \le k-1$, so all that we have left to check for Theorem \ref{counting} is that $x_1 > y_1$ if $x \ne y$. But this must be the case, because otherwise $sc(i_2) = 0$, so $sc(i_k) < k-1$, contradicting the remark following Theorem \ref{counting}.

$(3) \implies (1)$ Obvious.
\end{proof}

A similar classification applies to minimal elements, with the roles of even and odd reversed once again. In the $k = 3$ case, this theorem is a special case of the $n-1$ Equal Variable Principle, due to Vasile C{\^{\i}}rtoaje, which states that a maximal element $x$ has $x_1 \ge x_2 = \cdots = x_n$, while a minimal element has $x_1 = \cdots = x_{n-1} \ge x_n$, even in a more general setting where $S$ is defined by fixing the sum of first powers and $p$th powers (of course, the condition that the third derivative is positive is replaced with a different condition: that $f'(x^{\frac{1}{p-1}})$ is convex) \cite{equal}.

We also have the following unexpected bonuses:

\begin{corollary}\label{connect}
$S_k$ is a connected set.
\end{corollary}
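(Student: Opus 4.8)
The plan is to deduce connectedness from a soft topological principle, feeding it the hard information already packaged in Theorem \ref{extreme}. First I would observe that $S_k$ is compact: it is closed (cut out by the equalities $\sum_i x_i^j = s_j$ and the inequalities $x_i \ge x_{i+1}$), and since $k \ge 3$ the constraint $\sum_i x_i^2 = s_2$ confines $S_k$ to a sphere, so it is bounded. The function $\phi(x) := \sum_{i=1}^n x_i^k$ is continuous on $S_k$. The one fact I would extract from the preceding theory is that \emph{every local maximizer of $\phi$ is a global maximizer}: if $x$ locally maximizes $\phi$, then condition (1) of Theorem \ref{extreme} holds, so condition (3) holds, i.e.\ $x \succ_k y$ for every $y \in S_k$; taking $f(t) = t^k$ (for which $f^{(k)} = k! \ge 0$) in the definition of $\succ_k$ gives $\phi(x) \ge \phi(y)$ for all $y$, so $x$ is a global maximizer.

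Next I would record that $\phi$ has a \emph{unique} global maximizer. Since $S_k$ is compact and nonempty (the empty case being trivial), $\phi$ attains its maximum, and by the previous paragraph any two maximizers $x, x'$ both satisfy condition (3), so $x \succ_k x'$ and $x' \succ_k x$. Then $\sum_i f(x_i) = \sum_i f(x'_i)$ for every $f$ with $f^{(k)} \ge 0$; applying this to the prototype functions $f(s) = (s-t)_+^{k-1}$ for all $t$ (admissible exactly as in Lemma \ref{auxiliaries}) forces $\sum_i (x_i - t)_+^{k-1} = \sum_i (x'_i - t)_+^{k-1}$ for all $t$. As these are degree-$(k-1)$ splines whose knots are precisely the entries of the two tuples, their knot multisets coincide, so $\{x_i\} = \{x'_i\}$ and hence $x = x'$ (both being sorted). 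This is nothing but the antisymmetry of the partial order $\succ_k$.

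Finally I would invoke the following elementary principle, whose verification is the conceptual core: if $X$ is compact, $\phi\colon X \to \RR$ is continuous, every local maximum of $\phi$ is a global maximum, and the global maximum is attained at a single point $x^\ast$, then $X$ is connected. Indeed, if $X = A \sqcup B$ were a partition into disjoint nonempty clopen (hence compact) pieces with $x^\ast \in A$, then $\phi|_B$ would attain its maximum at some $b^\ast \in B$; since $B$ is open, $b^\ast$ would be a local maximum of $\phi$, hence a global maximum, hence equal to the unique maximizer $x^\ast \in A$ — contradicting $A \cap B = \varnothing$. Applying this with $X = S_k$ finishes the proof.

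I would expect the only real friction to be justifying that $\succ_k$ is genuinely antisymmetric (the uniqueness step), for which the cleanest route is the spline/knot argument above; everything else is formal bookkeeping. As a sanity check and an alternative for the hands-on reader, in the base case $n = k$ one can verify connectedness directly: fixing $p_1, \dots, p_{k-1}$ fixes the elementary symmetric functions $e_1, \dots, e_{k-1}$ by Newton's identities, so $S_k$ is parametrized by the constant term of $\prod_i (t - x_i)$, which ranges over exactly the interval on which this polynomial stays real-rooted, exhibiting $S_k$ as a continuous image of an interval.
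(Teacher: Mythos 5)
Your proof is correct and follows essentially the same route as the paper's: both use compactness to produce a local maximizer of $\sum_i x_i^k$ on each piece, invoke Theorem \ref{extreme} to upgrade it to a $\succ_k$-maximal element, and conclude from uniqueness of the maximal element. The only cosmetic difference is in the uniqueness step, where the paper tests with a single function $(z-\tfrac{x_i+y_i}{2})_+^{k-1}$ at the first index of disagreement while you use the whole family $(s-t)_+^{k-1}$ and compare knot multisets; both are the same antisymmetry argument.
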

\begin{proof}
Every connected subset of $S_k$ is compact, and thus contains a point $x$ which is a local maximum of $\sum_{i=1}^n x_i^k$. Thus, this point $x$ is maximal with respect to our ordering on $S_k$. All that's left is to prove that maximal elements are unique (because then any two connected sets contain the same maximal element $x$). Suppose $y$ is another maximal element, let $i$ be the first integer such that $x_i \ne y_i$, and consider the function $f(z) = (z-\frac{x_i+y_i}{2})_+^{k-1}$ to find a contradiction.
\end{proof}

\begin{corollary}\label{point}
If $n \ge k$, then $S_k$ consists a single point iff $S_k$ contains a point which is maximal or minimal in $S_{k-1} = \{(x_1, ..., x_n) \in \mathbb{R}^n | x_1 \ge x_2 \ge \cdots \ge x_n, \sum_{i=1}^n x_i^j = s_j \text{ for } j = 1, ..., k-2\}$.
\end{corollary}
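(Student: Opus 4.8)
The plan is to realize $S_k$ as a level set of the single function $g(x) = \sum_{i=1}^n x_i^{k-1}$ inside $S_{k-1}$; indeed $S_k = \{x \in S_{k-1} : g(x) = s_{k-1}\}$, since passing from $S_{k-1}$ to $S_k$ only imposes the one extra constraint $\sum_i x_i^{k-1} = s_{k-1}$. By Theorem \ref{extreme} applied with $k-1$ in place of $k$, together with its analogue for minimal elements, the maximal element of $S_{k-1}$ is the unique global maximizer of $g$ on $S_{k-1}$ and the minimal element is the unique global minimizer, uniqueness coming from the argument of Corollary \ref{connect}. I will carry this out under the assumption $k \ge 4$, so that these facts are available at level $k-1$ and $S_{k-1}$ is compact (its first two power sums being fixed); the case $k=3$ I would dispatch separately and elementarily, since there $S_k$ is a single point exactly when all coordinates are equal, which is precisely the minimal element of $S_2$.

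The easy direction is then immediate. Suppose $S_k$ contains a point $p$ that is maximal in $S_{k-1}$ (the minimal case is symmetric). Testing $p \succ_{k-1} y$ against $f(x) = x^{k-1}$ shows $g(p) \ge g(y)$ for every $y \in S_{k-1}$, so $s_{k-1} = g(p)$ is the global maximum of $g$. Every global maximizer of $g$ is a local one, hence a maximal element by Theorem \ref{extreme}, and maximal elements are unique; thus $g^{-1}\!\left(\max_{S_{k-1}} g\right) = \{p\}$, which is exactly $S_k$.

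For the converse I would argue combinatorially rather than topologically, since the single point $p$ may lie on the boundary of the ordering cone, where $S_{k-1}$ is not smooth and a naive intermediate-value argument fails. Suppose $S_k = \{p\}$. Then $p$ is trivially both the maximal and the minimal element of $S_k$, so by Theorem \ref{extreme} and its minimal analogue (now at level $k$) the coordinates of $p$ admit two groupings into $k-1$ consecutive constant blocks: one in which every odd-indexed block has size at most $1$, and one in which every even-indexed block has size at most $1$. With only $k-1$ blocks available, $p$ has at most $k-1$ distinct values, so since $n \ge k$ at least one value is repeated. Now call a maximal run of equal coordinates \emph{big} if it has size $\ge 2$: in an ``odd-blocks-small'' grouping every big run must occupy an even block, while in an ``even-blocks-small'' grouping every big run must occupy an odd block. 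Placing runs greedily into the smallest feasible positions, I would observe that immediately after any big run the two greedy constructions sit at positions of opposite parity, and that the runs following the last big run are all singletons and hence advance both constructions equally; therefore the minimal numbers of blocks needed for the two grouping types have opposite parity, and in particular are distinct.

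Since both of these minimal block-counts are at most $k-1$ and they are unequal, the smaller is at most $k-2$. That is precisely the analogue of condition (2) of Theorem \ref{extreme} with $k$ replaced by $k-1$, so $p$ is either the maximal or the minimal element of $S_{k-1}$, finishing the converse. The main obstacle I anticipate is exactly the parity lemma of the previous paragraph; the point to verify carefully is that a big run forces opposite parities in the two greedy block-placements and that trailing singleton runs preserve this discrepancy, so that the two grouping types can never both require the full $k-1$ blocks. The only other delicate bookkeeping will be the compactness and uniqueness statements at level $k-1$, which is what confines the argument to $k \ge 4$ and forces the separate (but trivial) treatment of $k=3$.
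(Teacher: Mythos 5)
Your proof is correct and follows essentially the same route as the paper's: the converse reduces, exactly as in the paper, to condition (2) of Theorem \ref{extreme} applied simultaneously to the maximal and minimal block structures of the single point, and your parity argument on greedy block counts is just a repackaging of the paper's observation that a string fitting both alternating patterns with $k-1$ $a$s must already fit one of them with $k-2$ $a$s. The only differences are presentational: you spell out the direction the paper calls obvious (via uniqueness of the maximal element) and treat $k=3$ separately, neither of which changes the substance.
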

\begin{proof}
One direction is obvious. The other direction follows from a restatement of the second condition in Theorem \ref{extreme}: to any vector $x$, we can associate a string of $a$s and $b$s by assigning each block of equal coordinates of $x$ an $a$ if it has length $1$, and a $ba$ if it has length more than $1$. Then $x$ is extremal iff there is a way to insert $a$s and $b$s such that the transformed string is alternating $a$s and $ba$s and contains at most $k-1$ $a$s (maximal if it starts with an $a$, minimal if it starts with a $b$). If $S_k$ contains only one point, then it must be both maximal and minimal, so its corresponding string is a substring of both $abaa...a$ and $baaba...a$ containing at least one $b$ (if there was no $b$, then it would contain $n$ $a$s). But any maximal common substring of those containing at least one $b$ is one of the two such strings with $k-2$ $a$s (we can prove this by induction: either it's an $a$ followed by a maximal common substring of the two such strings with $k-2$ $a$s, or it doesn't start with an $a$, in which case it's a substring of $abaaba...$ without the first $a$. In either case, it's a substring of one of those two strings with $k-2$ $a$s.) Thus it's either maximal or minimal for $k-1$.
\end{proof}

\section{Increasing paths}

Another natural question to ask about $S_k$, digressing from our main aim of solving inequalities, is the question of whether $x \succ_k y$ implies that $x$ and $y$ are connected by an increasing path (the last section can easily be used to prove this when $x$ is maximal or $y$ is minimal). For $k = 1$ this is obvious, and for $k = 2$, it follows from the theory of majorization. It also holds when $n = k$, as is easily seen from the fact that $S_k$ is connected along with Corollary \ref{smooth1}. We might guess that this is in fact true for all $n, k$, offering as evidence proofs in the cases $n = k+1$ and $k = 3$:

\begin{theorem}\label{path2}
If $n = k+1$, then for any $a, b \in S_k$ such that $a \succ_k b$, there is a continuous function $p:[0, 1] \to S_k$ such that $p(0) = b, p(1) = a$ and $p(t_1) \succ_k p(t_0)$ for $t_1 \ge t_0$.
\end{theorem}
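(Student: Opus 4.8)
The plan is to show that on $S_k$ with $n = k+1$ the relation $\succ_k$ is nothing more than the coordinatewise order in the two extreme roots, and then to build an explicit monotone staircase in the plane. The first step is to record the clean description: for $x, y \in S_k$ (so $\sum x_i^j = \sum y_i^j$ for $1 \le j < k$),
\[
x \succ_k y \iff x_1 \ge y_1 \text{ and } (-1)^k x_{k+1} \le (-1)^k y_{k+1}.
\]
The implication $\Leftarrow$ is precisely Corollary \ref{smooth2}. For $\Rightarrow$ I would test against prototypical functions: $f(t) = (t-c)_+^{k-1}$ (which has $f^{(k)} \ge 0$) with $c$ just below $x_1$ forces $x_1 \ge y_1$, while $f(t) = (-1)^k(c-t)_+^{k-1}$ (also with $f^{(k)} \ge 0$) with $c$ just above the smaller of the two minima forces $(-1)^k x_{k+1} \le (-1)^k y_{k+1}$. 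Writing $U(x) = x_1$ and $V(x) = (-1)^{k+1}x_{k+1}$, this reads $x \succ_k y \iff U(x) \ge U(y)$ and $V(x) \ge V(y)$. Moreover $x \mapsto (U(x), V(x))$ is injective on $S_k$, since fixing the largest and smallest roots together with $\sum x_i^j$ for $1 \le j < k$ determines the elementary symmetric functions of the remaining $k-1$ roots, hence determines $x$. Thus $S_k$ is homeomorphic to its planar image $\Omega := (U,V)(S_k)$, and $\succ_k$ becomes the ordinary product order on $\Omega$.

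The crucial simplification is that to certify $p(t_1) \succ_k p(t_0)$ I never need to re-examine comparability: by Corollary \ref{smooth2} it suffices that $U \circ p$ and $V \circ p$ both be nondecreasing. So the theorem reduces to the planar statement: given $b, a \in \Omega$ with $b \le a$ coordinatewise (which holds because $a \succ_k b$), there is a continuous path in $\Omega$ from $b$ to $a$ along which both coordinates are nondecreasing. I would deduce this from the following shape of $\Omega$, which I will call an \emph{increasing tube}: each vertical slice $\{V : (U,V) \in \Omega\}$ is a nonempty interval $[V_{\mathrm{lo}}(U), V_{\mathrm{hi}}(U)]$, the functions $V_{\mathrm{lo}}, V_{\mathrm{hi}}$ are continuous and nondecreasing, and they meet at the endpoints $U_{\min}, U_{\max}$, where $(U_{\min}, V_{\min})$ and $(U_{\max}, V_{\max})$ are exactly the unique minimal and maximal elements supplied by Theorem \ref{extreme} and Corollary \ref{connect}.

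Granting the tube shape, the path is explicit. As $U$ runs from $U(b)$ to $U(a)$ I set $V := \max\bigl(V(b),\, V_{\mathrm{lo}}(U)\bigr)$; this is nondecreasing, and it lies in the tube because $V_{\mathrm{lo}}(U) \le V(a)$ for all $U \le U(a)$ (monotonicity of $V_{\mathrm{lo}}$ plus $V_{\mathrm{lo}}(U(a)) \le V(a)$) and $V(b) \le V_{\mathrm{hi}}(U)$ for all $U \ge U(b)$. At $U = U(a)$ its value is $\max\bigl(V(b), V_{\mathrm{lo}}(U(a))\bigr) \le V(a)$, so I finish with a vertical segment raising $V$ up to $V(a)$, which stays in the tube since $V(a) \le V_{\mathrm{hi}}(U(a))$. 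Both coordinates are nondecreasing throughout, and by the earlier reduction the resulting path is $\succ_k$-increasing, with the correct endpoints $b$ and $a$.

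The main obstacle is establishing the increasing-tube shape, and within it the monotonicity of the boundary functions is the hard part. That the slices are intervals I expect to get from connectedness: fixing $x_1 = U$ reduces the remaining $k$ roots to an $n = k$ system with power sums $\sum_{i \ge 2} x_i^j = s_j - U^j$, which by Corollary \ref{smooth1} is totally ordered by $\succ_k$ and hence swept out by a single interval, so $V = (-1)^{k+1}x_{k+1}$ ranges over an interval. For monotonicity I would analyze the boundary of $\Omega$, which is the image of the double-root locus: a boundary point corresponds to a polynomial with $e_1, \dots, e_{k-1}$ fixed and a repeated root, and it is extremal in its slice (by Corollary \ref{smooth2}, within a slice the extreme values of $V$ are $\succ_k$-extremal, so a Lemma \ref{1d}-type argument forces two of the roots to collide). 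Parametrizing each such arc by the location of the repeated root, I anticipate showing that $U$ and $V$ move monotonically together along each arc and that the arcs link $\omega_{\min}$ to $\omega_{\max}$; this is already transparent when $k = 3$, where the boundary turns out to consist of arcs of three ellipses, each an increasing curve from $\omega_{\min}$ to $\omega_{\max}$. Turning ``the repeated-root arcs are increasing curves'' into the clean monotonicity of $V_{\mathrm{lo}}$ and $V_{\mathrm{hi}}$ for all $k$ is where the genuine work lies.
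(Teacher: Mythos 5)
Your reduction of $\succ_k$ (for $n=k+1$) to the product order on $(U,V) = (x_1, (-1)^{k+1}x_{k+1})$ is correct --- it is exactly Corollary \ref{smooth2} in one direction and the standard test functions $(t-c)_+^{k-1}$, $(-1)^k(c-t)_+^{k-1}$ in the other --- and the injectivity of $(U,V)$ on $S_k$ is right. But the argument has a genuine gap, which you flag yourself: the entire construction rests on the ``increasing tube'' shape of $\Omega$ (slices are intervals, $V_{\mathrm{lo}}$ and $V_{\mathrm{hi}}$ are nondecreasing, $V_{\mathrm{lo}}$ is continuous), and none of this is established. This is not a routine verification to be deferred: the inequalities you actually use, namely $V(b) \le V_{\mathrm{hi}}(U)$ for $U \ge U(b)$ and $V_{\mathrm{lo}}(U) \le V(a)$ for $U \le U(a)$, are tantamount to the existence of an increasing path from $b$ to the maximal element and a decreasing path from $a$ to the minimal element, so the planar picture postpones the difficulty rather than resolving it. The double-root-locus analysis you propose for general $k$ is a substantial computation that is not carried out, and even the $k=3$ ellipse claim is only asserted.

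The paper's proof sidesteps all of this and is worth comparing against: take any increasing path $q$ from $b$ to the maximal element of $S_k$ (available from the preceding section), and stop at the first time $t$ at which either the largest coordinate of $q(t)$ equals $a_1$ or the smallest equals $a_{k+1}$; such a $t$ exists because $U\circ q$ and $V\circ q$ are continuous and nondecreasing and reach the extremal values. At that moment Corollary \ref{smooth2} still gives $a \succ_k q(t)$, and after cancelling the matched coordinate one is in the $n=k$ situation, where the relevant set is connected and totally ordered by Corollary \ref{smooth1}, so the path can be completed. In $(U,V)$-coordinates this \emph{is} your staircase, but its two legs are supplied by objects already known to exist (the path to the maximum, and the totally ordered slice) rather than by unproven structural claims about $\partial\Omega$. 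To salvage your write-up, replace the appeal to monotone boundary functions by these two ingredients.
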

\begin{proof}
Let $q:[0, 1] \to S_k$ be any increasing path from $b$ to a maximal element of $S_k$, and let $t$ be the first time such that either the largest coordinate of $q(t)$ equals the largest coordinate of $a$ or the smallest coordinate of $q(t)$ equals the smallest coordinate of $a$. Then by Corollary \ref{smooth2}, we still have $a \succ_k q(t)$, so we can find an increasing path connecting $q(t)$ and $a$ (by canceling the equal coordinates and applying the fact that there is always an increasing path when $n = k$).
\end{proof}

\begin{theorem}\label{path}
For any $a, b \in S_3$ such that $a \succ_3 b$, there is a continuous function $p:[0, 1] \to S_3$ such that $p(0) = b, p(1) = a$ and $p(t_1) \succ_3 p(t_0)$ for $t_1 \ge t_0$.
\end{theorem}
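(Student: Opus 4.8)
My plan is to linearize the order $\succ_3$ and then argue by induction on $n$. For $x\in S_3$ define the profile $G_x(s)=\sum_{i=1}^n (x_i-s)_+^2$. By Lemma~\ref{classification} (with $k=3$) every $f$ with $f'''\ge0$ is a quadratic plus $\int (x-t)_+^2\,d\alpha(t)$ with $\alpha$ nondecreasing, and since the first two power sums agree on $S_3$ the quadratic part cancels; hence for $a,b\in S_3$ we have $a\succ_3 b$ if and only if $G_a(s)\ge G_b(s)$ for every $s$. The map $x\mapsto G_x$ is a continuous injection (we recover $\#\{i:x_i>s\}$ from $\tfrac12 G_x''(s)$), so a continuous \emph{increasing} path is exactly a continuous family $x(t)$ in $S_3$ whose profiles $G_{x(t)}$ are pointwise nondecreasing in $t$. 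I will also record two facts that follow by comparing $G_a$ and $G_b$ near their endpoints: just below $b_1$ one gets $a_1\ge b_1$, and just above $\min(a_n,b_n)$ one gets $a_n\ge b_n$.

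The induction is on $n$, with base cases $n=3$ (the $n=k$ case, from Corollary~\ref{connect} together with Corollary~\ref{smooth1}) and $n=4$ (Theorem~\ref{path2}). For the inductive step, the easy situation is when $a$ and $b$ share an extreme coordinate, say $a_1=b_1$. Then I cancel it: $a'=(a_2,\dots,a_n)$ and $b'=(b_2,\dots,b_n)$ lie in a common $S_3'$ (the first two power sums still match, each reduced by $a_1$), and $a'\succ_3 b'$. By the induction hypothesis there is an increasing path $p'$ from $b'$ to $a'$, and since $p'$ ends at $a'$ every $p'(t)$ lies in the down-set of $a'$, so $p'(t)_1\le a_2\le a_1$. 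Thus $p(t)=(a_1,p'(t))$ is ordered, lies in $S_3$, and its profile $(a_1-s)_+^2+G_{p'(t)}(s)$ is pointwise nondecreasing in $t$; this is the desired path. The symmetric reduction applies when $a_n=b_n$.

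The real content is reaching such a shared coordinate when neither extreme is shared, i.e.\ $a_1>b_1$ and $a_n>b_n$. Here the goal is weaker than the theorem: I only need an increasing path from $b$ to \emph{some} $c$ with $c_1=a_1$ that still satisfies $a\succ_3 c$, after which the cancellation above finishes the job. Equivalently, working inside the down-set $D_a=\{x\in S_3:G_x\le G_a\}$, I must climb monotonically from $b$ to the face $\{x_1=a_1\}$ (note $a_1=\max_{x\in D_a}x_1$, attained at $a$). Infinitesimally this is promising: a tangent direction $v$ (so $\sum v_i=0$ and $\sum x_i v_i=0$) changes the profile by $\tfrac12\,dG_x(s)=\sum_{i:x_i>s}v_i(x_i-s)$, which vanishes automatically for $s$ below all coordinates and is nonnegative exactly when the relevant partial sums are nonnegative --- precisely the partial-sum conditions of Theorem~\ref{counting}. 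In particular directions that raise $x_1$ while keeping $G_x$ nondecreasing always exist (the elementary increasing moves on triples, cf.\ Corollary~\ref{smooth1}), so one can always move up and to the right locally.

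The main obstacle is global: I must integrate these infinitesimal moves into an honest increasing path that stays inside $D_a$ until $x_1=a_1$. The naive approach --- take any increasing path from $b$ to the maximal element $M$ (which exists by the maximal case noted at the beginning of this section, and satisfies $M_1\ge a_1$) and stop the first time the top coordinate equals $a_1$ --- fails for $n\ge5$: the resulting $c$ has $c_1=a_1$ and $c\succ_3 b$, but nothing forces $c\preceq_3 a$, because once $n>k+1$ sharing a single coordinate no longer makes two elements comparable (this is exactly what Corollary~\ref{smooth2} supplies in Theorem~\ref{path2}, and exactly what is unavailable here). I therefore expect the crux to be the construction of a constrained ascent on $D_a$ that raises $x_1$ while never violating $G_x\le G_a$: whenever the constraint $G_x(s)=G_a(s)$ becomes active at some $s$, the ascent direction must be chosen to keep the active part of the profile pinned (increasing $G_x$ only where there is slack) while still advancing $x_1$. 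Proving that such a direction exists until $x_1=a_1$ is reached, and that the ascent terminates, is where the analysis --- via the partial-sum/sign-change bookkeeping of Theorem~\ref{counting} together with the face structure of $D_a$ --- has to do its work.
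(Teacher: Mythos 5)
Your setup is sound --- the profile characterization $a\succ_3 b\iff G_a\ge G_b$ pointwise, the endpoint facts $a_1\ge b_1$ and $a_n\ge b_n$, and the induction on $n$ via cancelling a shared coordinate are all correct and match the skeleton of the paper's argument (in fact you can cancel \emph{any} shared coordinate $a_i=b_j$, not just a shared extreme, by inserting the common value into the lifted path and re-sorting; the restriction to extremes is unnecessary). But the proposal stops exactly at the main step: in the case where $a$ and $b$ share no coordinate, you state that the crux is a constrained ascent from $b$ toward the face $x_1=a_1$ inside the down-set of $a$, and then explicitly defer the construction ("is where the analysis has to do its work"). That construction is the entire content of the theorem, and it is not supplied, so this is a genuine gap rather than a complete proof.

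The paper closes this gap by moving in the opposite direction: instead of pushing $b$ up, it pushes $a$ down. Assuming a minimal counterexample (so $a_i\ne b_j$ for all $i,j$), let $m$ be the first index with $b_1>a_m$ and let $x_0$ be the first zero of $r_3(x)=\sum(x-a_i)_+^2-\sum(x-b_i)_+^2$ below $a_1$. A local analysis of $r_3$ at its last non-increasing point $y$ and at $x_0$ (using that $r_3''$ jumps by $+2$ at each $a_i$ and that nonnegativity forces $r_3'(x_0)=0$, $r_3''(x_0)>0$) shows $a_{m+1}>x_0$, i.e.\ the three consecutive coordinates $a_{m-1},a_m,a_{m+1}$ all lie strictly above the first zero of $r_3$. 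One can therefore continuously decrease the triple $(a_{m-1},a_m,a_{m+1})$ in the $\succ_3$ order --- this perturbs $r_3$ only on $[a_{m+1},a_{m-1}]$, where $r_3$ stays positive --- and since $a_{m-1}>b_1>a_m$, the triple must collide with some $b_i$ before reaching its minimal configuration. That collision produces a shared coordinate, and induction finishes. If you want to complete your version, you would need an argument of comparable precision for why your ascent on $D_a$ can always continue until $x_1=a_1$; the paper's trick of localizing the motion to three consecutive coordinates of $a$ chosen to straddle $b_1$, and controlling $r_3$ only on that interval, is the idea your outline is missing.
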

\begin{proof}
Suppose $a, b$ are a counterexample with minimal $n$. Then $a_i \ne b_j$ for any $i, j$. Let $m$ be the first integer such that $b_1 > a_m$. Define $r_3(x) = \sum_{i=1}^n (x-a_i)_+^2 - \sum_{i=1}^n (x-b_i)_+^2$, and let $x_0$ be the first real number less than $a_1$ such that $r_3(x_0) = 0$. We will prove first that if $r_3(x) \ge 0$ for all $x$, then $a_{m+1} > x_0$:

Let $y$ be the largest real less than $a_1$ such that $r_3'(y) \le 0$, then clearly $b_1 > y > x_0$ and $r_3''(y) < 0$. Also, we must have $r_3'(x_0) = 0$ and $r_3''(x_0) > 0$ (from the nonnegativity of $r_3$), so there must be at least two components of $a$ between $y$ and $x_0$ (since $r_3''$ is always an even integer, and increases by two for every component of $a$). Thus, since $a_{m-1} > b_1 > y$, we have $a_{m+1} = a_{m-1+2} > x_0$.

Thus, we can continuously decrease $(a_{m-1}, a_m, a_{m+1})$ with respect to our ordering without invalidating the inequality (since $r_3$ will stay the same outside the interval $[a_{m-1}, a_{m+1}]$ and never hits zero inside that interval), until one of $a_{m-1}, a_m, a_{m+1}$ is equal to some $b_i$. This happens by the time $(a_{m-1}, a_m, a_{m+1})$ becomes minimal, since $a_{m-1} > b_1 > a_m$. Then we can use induction to find a path from here.
\end{proof}

As a Corollary, we get a slight generalization of Schur Convexity:

\begin{corollary}\label{schur}
For any symmetric function $f: \mathbb{R}^n \to \mathbb{R}$, we have $f(a) \ge f(b)$ whenever $a \succ_3 b$ if and only if
\[
\left(\frac{\frac{\partial f}{\partial x_1}-\frac{\partial f}{\partial x_2}}{x_1-x_2} - \frac{\frac{\partial f}{\partial x_2}-\frac{\partial f}{\partial x_3}}{x_2-x_3}\right)(x_1-x_3) \ge 0
\]
for all $x_1, x_2, x_3 \in \mathbb{R}$.
\end{corollary}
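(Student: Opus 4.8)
The plan is to read the displayed inequality as a statement about the directional derivatives of $f$ along certain elementary increasing directions, and then to pass from these directions to the full ordering $\succ_3$ using the increasing path produced by Theorem~\ref{path}. Fix a point $x = (x_1, \dots, x_n)$, write $g_m = \partial f/\partial x_m$ evaluated at $x$, and choose three coordinates $i < j < l$ with $x_i > x_j > x_l$. Consider the tangent vector $v$ supported on these coordinates with
\[
v_i = x_j - x_l, \qquad v_j = -(x_i - x_l), \qquad v_l = x_i - x_j.
\]
A short computation gives $\sum_m v_m = \sum_m x_m v_m = 0$, so moving along $v$ preserves $s_1$ and $s_2$ to first order. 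Moreover, writing $Q = \tfrac{1}{x_i-x_l}\bigl(\tfrac{g_i-g_j}{x_i-x_j} - \tfrac{g_j-g_l}{x_j-x_l}\bigr)$, one finds that $\nabla f \cdot v = g_iv_i + g_jv_j + g_lv_l = Q\,(x_i-x_j)(x_j-x_l)(x_i-x_l)$, while the quantity appearing in the statement equals $Q\,(x_i-x_l)^2$; since $x_i > x_j > x_l$, both are positive multiples of $Q$ and hence have the same sign. Thus (using the symmetry of $f$ to move between coordinate triples) the displayed condition is exactly the requirement that $\nabla f \cdot v \ge 0$ for every such three--coordinate direction.

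The next step is to show that these three--coordinate directions generate all increasing directions. Applying Lemma~\ref{auxiliaries} with $k = 2$ to the convex function $f'$, a tangent vector $v$ with $\sum_m v_m = \sum_m x_m v_m = 0$ points in an increasing direction for $\succ_3$ at $x$ precisely when $r_2^v(t) := \sum_m v_m (x_m - t)_+ \ge 0$ for all $t$. The function $r_2^v$ is piecewise linear with breakpoints at the $x_m$, vanishes outside $[x_n, x_1]$, and has $v_m$ for its jump in slope at $x_m$. Expanding $r_2^v$ in the hat--function (finite element) basis of tents peaked at the interior breakpoints — whose coefficients are the values $r_2^v(x_m) \ge 0$ — exhibits $v$ as a nonnegative combination of three--coordinate directions of the type above, each supported on three consecutive coordinates. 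Consequently $\nabla f \cdot v \ge 0$ for every increasing direction as soon as it holds for the elementary ones. This yields the ``if'' direction: given $a \succ_3 b$, Theorem~\ref{path} supplies an increasing path $p$ from $b$ to $a$, and $f \circ p$ is nondecreasing because the forward tangent $p'(t)$ is itself an increasing direction, so $\tfrac{d}{dt} f(p(t)) = \nabla f(p(t)) \cdot p'(t) \ge 0$; integrating gives $f(a) \ge f(b)$.

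For the converse I would realize each elementary direction by an honest increasing curve rather than an infinitesimal one, reusing the construction from the proof of Corollary~\ref{gensen2}: pick monotone differentiable functions on the three active coordinates whose first and second power sums are constant and whose derivatives at $\varepsilon = 0$ are $v_i, v_j, v_l$. By Corollary~\ref{smooth1} (the $n = k = 3$ case) such a curve $p$ is $\succ_3$--increasing, so the hypothesis forces $\tfrac{d}{d\varepsilon}\big|_{0^+} f(p(\varepsilon)) = \nabla f \cdot v \ge 0$, which is the displayed inequality at $x$; ranging over all points and all coordinate triples recovers it for all $x_1, x_2, x_3$.

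The main obstacle I anticipate is the boundary of $S_3$, where coordinates coincide and both the cone of increasing directions and its three--coordinate generators degenerate. I would handle this by a density and continuity argument — the displayed condition is closed, and the points with distinct coordinates are dense — or by perturbing the path of Theorem~\ref{path} away from the boundary. A secondary point needing care is the regularity of that path: to integrate the directional derivative one wants $p$ to be piecewise $C^1$, which the construction underlying Theorem~\ref{path} provides, and one should confirm that the tent--to--direction correspondence produces exactly the vectors $v$ computed above.
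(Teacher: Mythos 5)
Your proof is correct in substance and rests on the same underlying mechanism as the paper --- reading the displayed expression as a positive multiple of the second-divided-difference quantity $Q$, recognizing its nonnegativity as the local form of monotonicity under three-variable $\succ_3$ moves, and then invoking Theorem \ref{path} --- but you take a genuinely different route through the middle. The paper's proof is a two-line sketch: it observes that the condition is locally equivalent to $f(x_1,x_2,x_3,a_4,\dots,a_n)\ge f(y_1,y_2,y_3,a_4,\dots,a_n)$ whenever $(x_1,x_2,x_3)\succ_3(y_1,y_2,y_3)$, and then uses the \emph{proof} of Theorem \ref{path} (not merely its statement), which builds the path as a finite chain of moves each altering only three consecutive coordinates, so that only three-variable monotonicity is ever needed. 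You instead prove the stronger infinitesimal statement that the three-consecutive-coordinate directions generate the whole cone of $\succ_3$-increasing tangent directions, via the tent-function decomposition of $r_2^v$ (which is correct: the coefficients are the values $r_2^v(x_m)\ge 0$, and your algebra identifying $\nabla f\cdot v$ with $Q\,(x_i-x_j)(x_j-x_l)(x_i-x_l)$ checks out). This buys you the ability to use Theorem \ref{path} as a black box --- any increasing path suffices, not just one assembled from three-variable moves --- at the cost of the extra cone-decomposition lemma and of having to address the regularity of the path and the boundary points where coordinates coincide and the tents degenerate. You flag both issues honestly; note that the paper's own argument silently skirts the same degeneracy (its chain of moves terminates steps precisely at points where coordinates collide), so neither version is airtight there without the continuity/density patch you describe. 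Your converse direction, realizing each elementary direction by an honest increasing curve as in the construction of Corollary \ref{gensen2} and differentiating at $0$, is exactly what the paper's phrase ``(locally) equivalent'' implicitly asserts, made explicit.
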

\begin{proof}
The given condition is (locally) equivalent to the condition that
\[
f(x_1, x_2, x_3, a_4, ..., a_n) \succ_3 f(y_1, y_2, y_3, a_4, ..., a_n)
\]
whenever $(x_1, x_2, x_3) \succ_3 (y_1, y_2, y_3)$, and by the proof of Theorem \ref{path} we can form a (finite) chain of inequalities of this form, starting from $a$ and ending at $b$, as long as $a \succ_3 b$.
\end{proof}

Unfortunately, the proofs of the existence of increasing paths for $k = 3$ and $n=k+1$ don't generalize - in both cases, the paths found are built up by changing $k$ variables at a time. For instance, the following class of inequalities can't be proven by following such paths:

\begin{theorem}\label{six}
For any $x, y, z, a, b, c \ge 0$ such that $x^2+y^2+z^2 = a^2+b^2+c^2$ and $x^3+y^3+z^3 = a^3+b^3+c^3$, we have
\[
(x, y, z, -z, -y, -x) \succ_4 (a, b, c, -c, -b, -a)
\]
iff $\max(x, y, z) \ge \max(a, b, c)$.
\end{theorem}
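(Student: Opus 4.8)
The plan is to translate the relation $\succ_4$ into the nonnegativity of the auxiliary function $r_4$ and then exploit the antisymmetry of the two configurations to cut the problem down to three variables. First I would record that both vectors lie in the same $S_4$: the first and third power sums of $(x,y,z,-z,-y,-x)$ and of $(a,b,c,-c,-b,-a)$ vanish automatically, while $\sum x_i^2=\sum a_i^2$ gives matching second power sums, so the hypotheses of Lemma \ref{auxiliaries} are met with weights $+1$ on the first six coordinates and $-1$ on the last six. By Lemma \ref{auxiliaries}, the relation $(x,y,z,-z,-y,-x)\succ_4(a,b,c,-c,-b,-a)$ is then equivalent to $r_4(t)\ge 0$ for all $t$, where $r_4(t)=\sum_i (u_i-t)_+^3-\sum_i (v_i-t)_+^3$ and $u,v$ denote the two vectors. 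Necessity of $\max(x,y,z)\ge\max(a,b,c)$ is immediate: if $\max(a,b,c)>\max(x,y,z)$, then for $t$ strictly between them every term $(u_i-t)_+^3$ vanishes while $\sum_i(v_i-t)_+^3>0$, so $r_4(t)<0$.

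For sufficiency I would first observe that $r_4$ is \emph{even}. Since $\sum_i u_i^j=\sum_i v_i^j$ for $0\le j\le 3$, the identity $(w-t)^3=(w-t)_+^3-(t-w)_+^3$ gives $r_4(t)=\sum_i (t-u_i)_+^3-\sum_i (t-v_i)_+^3$ as well, and because $\{u_i\}=\{-u_i\}$ and $\{v_i\}=\{-v_i\}$ the right-hand side at $-t$ reproduces $r_4(t)$. Hence it suffices to prove $r_4\ge 0$ on $[0,\infty)$, where only the nonnegative coordinates contribute, so that
\[
r_4(t)=g_X(t)-g_A(t),\qquad g_P(t):=\sum_{w\in P}(w-t)_+^3,
\]
with $X=\{x,y,z\}$, $A=\{a,b,c\}$. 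The problem is thus reduced to the three-variable statement: if $X,A$ are nonnegative triples with $\sum_X w^2=\sum_A w^2$, $\sum_X w^3=\sum_A w^3$ and $\max X\ge\max A$, then $g_X(t)\ge g_A(t)$ for all $t\ge 0$.

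To prove this reduced inequality I would run a monotonicity argument along the one–parameter family of nonnegative triples with $\sum w^2,\sum w^3$ held fixed; this family is a connected arc (in the circle of ideas around Corollary \ref{connect}). Inside the region $p_1>p_2>p_3$ its tangent direction is, up to scale,
\[
\dot p=\bigl(p_2p_3(p_2-p_3),\,-p_1p_3(p_1-p_3),\,p_1p_2(p_1-p_2)\bigr),
\]
the unique direction annihilating $d(\sum p^2)$ and $d(\sum p^3)$; along it $p_1=\max P$ is nondecreasing, so the arc is monotonically parametrized by its largest entry and $X$ lies downstream of $A$. It therefore suffices to show $g_P(t)$ is nondecreasing along this flow, i.e. that
\[
\Phi(t):=\sum_i (p_i-t)_+^2\,\dot p_i\ge 0\qquad(t\ge0).
\]
Here the two constraints pay off as boundary data: $\Phi(0)=\sum p_i^2\dot p_i=\tfrac13\tfrac{d}{d\tau}\sum p_i^3=0$ and $\Phi'(0)=-2\sum p_i\dot p_i=-\tfrac{d}{d\tau}\sum p_i^2=0$. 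Now $\Phi$ is a quadratic spline with knots $p_3,p_2,p_1$, and I would check its three pieces: on $(p_2,p_1)$ it equals $(p_1-t)^2p_2p_3(p_2-p_3)\ge0$; on $(p_3,p_2)$ it is concave (leading coefficient $\dot p_1+\dot p_2=-p_3(p_1-p_2)(p_1+p_2-p_3)\le0$) and nonnegative at both endpoints, hence nonnegative; and on $(0,p_3)$ it is convex (leading coefficient $\dot p_1+\dot p_2+\dot p_3=(p_1-p_2)(p_2-p_3)(p_1-p_3)\ge0$) with vertex forced to $t=0$ by $\Phi(0)=\Phi'(0)=0$, hence again nonnegative. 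Moving from $A$ to $X$ along the arc then yields $g_X(t)\ge g_A(t)$ for every $t\ge0$, completing the proof.

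The main obstacle is precisely this variational inequality $\Phi\ge0$: nothing from Section 3 applies directly, since one can build genuine configurations satisfying all hypotheses for which Theorem \ref{counting} is powerless. For instance $X\approx\{3,\,2.68,\,0.22\}$, $A=\{2.9,\,2.8,\,0\}$ has equal second and third power sums and $\max X>\max A$, yet its weight sequence shows five sign changes in the partial sums and its $r_2$ has four sign changes, far exceeding the $k-1$ and $k-2$ thresholds of Theorem \ref{counting}. Thus the monotonicity computation—together with the recognition that $\Phi(0)=\Phi'(0)=0$ \emph{is} the content of the two power-sum constraints—is the real engine. A secondary point needing care is the justification that the fixed-power-sum arc is connected and monotonically parametrized by its maximum, and the continuity handling of the degenerate configurations (coincidences $p_i=p_j$ or $p_3=0$) where the tangent formula collapses.
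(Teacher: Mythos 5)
Your reduction is sound as far as it goes: the verification that both vectors satisfy the moment conditions of Lemma \ref{auxiliaries}, the necessity direction, the proof that $r_4$ is even, and the restriction to the three nonnegative coordinates on $[0,\infty)$ are all correct, and your key computation that $\Phi(t)=\sum_i(p_i-t)_+^2\,\dot p_i$ is a piecewise quadratic which vanishes to second order at $0$ and is nonnegative on each of the three pieces checks out (the identities $\dot p_1+\dot p_2=-p_3(p_1-p_2)(p_1+p_2-p_3)$ and $\sum\dot p_i=(p_1-p_2)(p_2-p_3)(p_1-p_3)$ are right, and on $[0,p_3]$ one even has the clean form $\Phi(t)=t^2\sum_i\dot p_i$). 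However, there is a genuine gap in the global part of the flow argument. You need (i) that the fiber $\{p_1\ge p_2\ge p_3\ge 0,\ \sum p_i^2=s_2,\ \sum p_i^3=s_3\}$ is connected, (ii) that it is monotonically parametrized by $p_1$, and (iii) that $\max X\ge\max A$ places $X$ downstream of $A$ — and none of these is established. They are not routine: this fiber is \emph{not} one of the sets $S_k$ of the paper (the constraints are on the second and third power sums, not the first $k-1$), so Corollary \ref{connect} does not apply, and the level set of a convex function cut by the ordered cone can in general disconnect. Moreover the vector field degenerates exactly on the walls $p_2=p_3$, $p_1=p_2$, $p_3=0$ where the ordered representative is only piecewise smooth, so "the arc is monotonically parametrized by its maximum" needs an actual proof (one can be given — for fixed $M=p_1$ the substitution $u=p_2+p_3$, $v=p_2p_3$ forces $u^3-3(s_2-M^2)u+2(s_3-M^3)=0$ with $s_2-M^2\le u^2\le 2(s_2-M^2)$, on which interval the cubic is monotone, so $M$ determines the triple — but you have not supplied it, and without it the conclusion "$g_X\ge g_A$" does not follow from $\Phi\ge0$).

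You should also know that your diagnosis that "nothing from Section 3 applies" is only half right, and this is where the paper takes a much shorter route. It is true that Theorem \ref{counting} as a black box fails (the partial sums of the weights can indeed have five sign changes, as in your example), but the \emph{method} of its proof still works once combined with the symmetry you already observed: five sign changes in the partial sums give, via Lemma \ref{wiggles}, at most three sign changes for $r_3=\tfrac13 r_4'$; since $r_4$ is even, $r_4'$ is odd and therefore has at most one sign change on $(0,\infty)$; together with $r_4(0)=r_4'(0)=0$ (which is exactly the content of the two power-sum hypotheses) and $r_4\equiv 0$ near $+\infty$, another application of Lemma \ref{wiggles} on $[0,b]$ shows $r_4$ has no sign change on $(0,\infty)$, hence constant sign everywhere, and evaluating at $t=\max(a,b,c)$ fixes the sign. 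This disposes of the sufficiency in a few lines and avoids the arc-structure issues entirely; your variational argument is an interesting alternative but is the harder road, and as written it is incomplete.
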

\begin{proof}
The proof is based on Theorem \ref{counting}. There are at most five sign changes in the partial sums of the weights, so $r_3 = \frac{r_4'}{3}$ for this inequality has at most three sign changes. But $r_4$ is symmetric around $0$, so $r_4'$ has at most one sign change in $(0, \infty)$, and the given conditions are equivalent to $r_4(0) = r_4'(0) = 0$, so $r_4$ has no sign changes in $(0, \infty)$, and thus $r_4$ is always the same sign. Since $r_4(\max(a,b,c)) > 0$ when $\max(x, y, z) \ge \max(a, b, c)$, we see that in this case $r_4(t) \ge 0$ for all $t$. (The theorem still holds when $x^3+y^3+z^3 \ge a^3+b^3+c^3$, but we only need this version of the theorem for our counterexample.)
\end{proof}

\begin{example}
Now consider the path $p(t) = (x(t), y(t), z(t), -z(t), -y(t), -x(t))$ defined by the differential equations $\frac{dx}{dt} = \frac{1}{x(x-y)(x-z)}$ (and similarly for $\frac{dy}{dt}, \frac{dz}{dt}$). It's easy to see that then $\frac{d}{dt}(x^2+y^2+z^2) = \frac{d}{dt}(x^3+y^3+z^3) = 0$, and for any function $f$ with $f^{(4)} \ge 0$, we have
\[
\frac{d}{dt} \sum_{i=1}^6 f(p_i(t)) = [x(t), y(t), z(t), 0; f'] + [0, -x(t), -y(t), -z(t); f'].
\]
If there was some increasing path from $p(t)$ to $p(t+\epsilon)$ made by changing only $4$ variables at a time, this would imply that the above expression could be written as a positive linear combination of expressions of the form $[p_i(t), p_j(t), p_k(t), p_l(t); f']$. Plugging in $x(t) = 3, y(t) = 2, z(t) = 1$, we see that then Theorem \ref{Big Hammer} would be sufficient to prove that $f'(3) - 3f'(2) + 3f'(1) - 3f'(-1) + 3f'(-2) - f'(-3) \ge 0$, but we've already seen (Remark \ref{counter}) that this is not the case, contradiction.
\end{example}

\begin{conjecture}
For any integers $n, k$, reals $s_1, ..., s_{k-1}$, and any $a, b \in S_k = \{(x_1, ..., x_n) \in \mathbb{R}^n | x_1 \ge x_2 \ge \cdots \ge x_n, \sum_{i=1}^n x_i^j = s_j \text{ for } j = 1, ..., k-1\}$ such that $a \succ_k b$, there is a continuous function $p:[0, 1] \to S_k$ such that $p(0) = b, p(1) = a$ and $p(t_1) \succ_k p(t_0)$ for $t_1 \ge t_0$.
\end{conjecture}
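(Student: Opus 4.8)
The plan is to recast the existence of an increasing path as a reachability problem for a cone field on $S_k$, and then to attack it by a maximal-reachable-element argument. The first step is purely infinitesimal. If $p$ is a $C^1$ path in $S_k$, then $\frac{d}{dt}\sum_{i=1}^n f(p_i(t)) = \sum_{i=1}^n f'(p_i(t))\,p_i'(t)$, so $p$ is increasing exactly when, for almost every $t$, the velocity $v = p'(t)$ satisfies $\sum_i g(p_i(t))\,v_i \ge 0$ for every $g$ with $g^{(k-1)} \ge 0$. Applying Lemma \ref{auxiliaries} (with $k$ replaced by $k-1$, weights $v_i$, and arguments $p_i(t)$) this is equivalent to $v$ lying in the convex cone
\[
C_x = \Big\{v \in \RR^n : \textstyle\sum_i v_i x_i^j = 0 \ (0 \le j \le k-2),\ \sum_i v_i (x_i - u)_+^{k-2} \ge 0 \ \text{for all } u\Big\},
\]
where $x = p(t)$. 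The equations say precisely that $v$ is tangent to $S_k$, and the inequalities say that the cumulative function $R_t(u) = \sum_i (p_i(t)-u)_+^{k-1}$ is pushed weakly upward at every $u$, since $\frac{d}{dt}R_t(u) = (k-1)\sum_i v_i(p_i(t)-u)_+^{k-2}$. Thus an increasing path is one whose velocity everywhere lies in the cone of directions that raise the r-function while fixing the low-order moments.

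With this in hand I would reduce the conjecture to the statement that the reachable set $\mathcal{R}(b)$ of endpoints of increasing paths from $b$ equals the up-set $U(b) = \{a \in S_k : a \succ_k b\}$. The inclusion $\mathcal{R}(b) \subseteq U(b)$ is immediate by integrating the infinitesimal inequality. For the reverse inclusion I would first argue that $\mathcal{R}(b)$ is closed, using compactness of $S_k$ together with convexity of the velocity cones $C_x$ (a Filippov-type compactness argument for trajectories of the differential inclusion $p' \in C_p$); care is needed because $C_x$ only varies upper-semicontinuously across the loci where coordinates coincide and $S_k$ fails to be a manifold, so there the cone and the tangent directions must be read through one-sided variations as in Lemma \ref{1d} and Theorem \ref{extreme}.

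The core of the argument would be a variational selection. Fixing a target $a \in U(b)$, consider the set of reachable points still dominated by $a$, and among them choose $a^*$ maximizing $\Phi(x) = \sum_i x_i^k$; this exists by the closedness just discussed, and note that $\Phi$ is automatically nondecreasing along increasing paths since $g(x)=x^{k-1}$ has positive $(k-1)$-th derivative. If $a^* = a$ we are done. Otherwise, set $D(u) = \sum_i (a_i-u)_+^{k-1} - \sum_i (a_i^*-u)_+^{k-1} \ge 0$, which is not identically zero. I would seek $v \in C_{a^*}$ whose induced change $\dot R(u)=(k-1)\sum_i v_i(a_i^*-u)_+^{k-2}$ vanishes on the contact set $\{D=0\}$ (so that a short step does not overshoot the bound $R^{(a^*)} \le R^{(a)}$) while $\sum_i v_i (a_i^*)^{k-1} > 0$ (so that $\Phi$ strictly increases); following such a $v$ would produce a reachable point below $a$ with larger $\Phi$, contradicting maximality. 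The existence of $v$ should be forced by a Farkas/separation argument inside $C_{a^*}$: the failure to find one would exhibit a certificate making $a^*$ maximal in its own level set, hence, being below $a$, equal to $a$ by uniqueness of maximal elements (Corollary \ref{connect}).

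The hard part will be exactly this last local step, and it is where the phenomenon behind Theorem \ref{six} bites. The extreme rays of $C_{a^*}$ are in general not supported on only $k$ coordinates, so the separation argument cannot be carried out one $k$-tuple at a time; one must work with the full cone, which may be high-dimensional, and must simultaneously respect the rigid constraint that $\dot R$ vanish on the contact set and the stratified, non-smooth geometry of $S_k$ where clusters of equal coordinates split and merge. Controlling the "do not overshoot" and "increase $\Phi$" requirements together, uniformly enough that the infinitesimal moves integrate to an honest continuous path, is precisely the difficulty that the $n=k+1$ and $k=3$ proofs (Theorems \ref{path2} and \ref{path}) sidestep by changing only $k$ variables at a time, and it is the reason the general case remains open.
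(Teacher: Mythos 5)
This statement is labeled a \emph{conjecture} in the paper: there is no proof of it there to compare yours against, and the two partial results the paper does prove (Theorems \ref{path2} and \ref{path}, covering $n=k+1$ and $k=3$) proceed by explicit $k$-variable moves rather than by the cone-field/reachability framework you set up. So the only question is whether your proposal closes the problem, and it does not --- by your own account. The infinitesimal reduction is sound: the cone $C_x$ is a correct application of Lemma \ref{auxiliaries} with $k$ replaced by $k-1$, and restricting to absolutely continuous trajectories of the differential inclusion $p'\in C_p$ is a legitimate way to produce the required monotone path. But the argument has a genuine gap exactly where you flag it: the Farkas/separation step at the maximizer $a^*$ is not carried out, and nothing in the proposal rules out the possibility that no admissible $v\in C_{a^*}$ both respects the contact set $\{D=0\}$ and strictly increases $\Phi$, even when $a^*$ is not maximal in $S_k$. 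The dichotomy ``either such a $v$ exists or $a^*$ is maximal in its level set'' is asserted, not derived; a failed separation would a priori only certify a local obstruction, not global maximality in the sense of Theorem \ref{extreme}.

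Two further points are glossed over and would need real work even granting the local step. First, the closedness of the reachable set via a Filippov-type argument requires the set-valued map $x\mapsto C_x$ to be upper semicontinuous with convex, suitably bounded values; $C_x$ is an unbounded cone, it must be intersected with the tangent cone to the stratified set $\{x_1\ge\cdots\ge x_n\}$ (the moment equations alone do not keep the path in $S_k$ where coordinates coincide), and its facial structure jumps as clusters merge. Second, even with a good $v$ at $a^*$, one must integrate these directions into a path that stays weakly below $a$ for \emph{all} test functions simultaneously, not just avoid overshooting at first order on the contact set; the contact set can grow instantaneously. These are precisely the difficulties the paper's Example after Theorem \ref{six} is designed to exhibit (extreme rays of the cone supported on more than $k$ coordinates), so your diagnosis of where the problem lives is accurate --- but the statement remains unproven by this proposal, as it does in the paper.
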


\bibliography{article}
\bibliographystyle{plain}

\end{document}